\tiny\color{gray},  
\newmdtheoremenv[%
backgroundcolor=green!10,%
outerlinecolor=black,%
leftmargin=0,%
rightmargin=0,
innertopmargin =3pt,%
innerleftmargin = 5pt,
innerrightmargin = 5pt,
splittopskip = \topskip,%
skipabove = \baselineskip,%
skipbelow = \baselineskip,%
roundcorner=5, ntheorem]
{theorem}{Theorem}[section]
\newtheorem{lemma}{Lemma}[section]
\newtheorem{remark}{Remark}[section]
\newenvironment{proof}{{\noindent\it Proof.}\quad}{\hfill $\square$\\}
\newcommand{\Lt}{L^2}
\newcommand{\St}{\mathbb{S}^2}
\newcommand{\Pn}{\mathbb{P}_n}
\newcommand{\RR}{\mathbb{R}}
\newcommand{\x}{\bm{x}}
\newcommand{\y}{\bm{y}}
\newcommand{\X}{\mathcal{X}}
\begin{document}
\title{Spherical configurations and quadrature methods for integral equations of the second kind}

\author{Congpei An\footnotemark[1]
       \quad\quad Hao-Ning Wu\footnotemark[2]}

\renewcommand{\thefootnote}{\fnsymbol{footnote}}
\footnotetext[1]{Department of Mathematics, The University of Hong Kong, Hong Kong, China (andbachcp@gmail.com)}
\footnotetext[2]{Department of Mathematics, University of Georgia, Athens, GA 30602, USA (hnwu@uga.edu)}


\maketitle

\begin{abstract}
In this paper, we propose and analyze a product integration method for the second-kind integral equation with weakly singular and continuous kernels on the unit sphere $\mathbb{S}^2$. We employ quadrature rules that satisfy the Marcinkiewicz--Zygmund property to construct hyperinterpolation for approximating the product of the continuous kernel and the solution, in terms of spherical harmonics. By leveraging this property, we significantly expand the family of candidate quadrature rules and establish a connection between the geometrical information of the quadrature points and the error analysis of the method. We then utilize product integral rules to evaluate the singular integral with the integrand being the product of the singular kernel and each spherical harmonic.  We derive a practical $L^{\infty}$ error bound, which consists of two terms: one controlled by the best approximation of the product of the continuous kernel and the solution, and the other characterized by the Marcinkiewicz--Zygmund property and the best approximation polynomial of this product. Numerical examples validate our numerical analysis.
\end{abstract}

\textbf{Keywords: }{integral equation, product integration, Nystr\"{o}m method, quadrature, hyperinterpolation, point distribution}

\textbf{AMS subject classifications.} 45B05, 65R20, 65D32, 41A10

\section{Introduction}
We consider the Fredholm integral equation of the second kind
\begin{equation}\label{equ:equation}
\varphi(\x) -\int_{\St}h(|\x-\y|)K(\x,\y)\varphi(\y){ d}\omega(\y) = f(\x)
\end{equation}
on the 2-sphere $\St:=\{\x\in\mathbb{R}^3:\|\x\|_2=1\}$ with surface measure ${ d}\omega$, where $|\x-\y|:=\sqrt{2(1-\x\cdot \y)}$ denotes the Euclidean distance between points $\x$ and $\y$ on $\St$. The inhomogeneous term $f$, the kernel $K$, and the solution $\varphi$ are assumed to be continuous, whereas the weight function $h:(0,\infty)\rightarrow\RR$ is allowed to be weakly singular. It is assumed throughout this paper that the homogeneous equation corresponding to \eqref{equ:equation} has no non-trivial solution; then as a consequence of the classic Riesz theory \cite{riesz1918lineare}, the inhomogeneous equation \eqref{equ:equation} has a unique solution continuously depending on $f$. In the past decades, numerically solving \eqref{equ:equation} has attracted much attention, and many important works have emerged, see, for example, \cite{MR1464941,MR3397293,MR2986407,MR2371991,graham2002fully,MR2457245,tran2009boundary} and references therein.

In this paper we are concerned with a class of quadrature-type methods for the numerical solution of the equation \eqref{equ:equation}. More precisely, we are interested in computing the approximate solution $\varphi_{\gamma}$ that satisfies an equation of the form
\begin{equation}\label{equ:nystromform}
\varphi_{\gamma}(\x) -\sum_{j=1}^m W_j(\x)K\left(\x,\x_j\right)\varphi_{\gamma}\left(\x_j\right) = f(\x),\quad \x\in\St,
\end{equation}
where $\varphi_{\gamma}$ is the computed solution with index $\gamma$ explained in Section \ref{sec:proposedscheme}, $\x_1,\ldots,\x_m$ are distinct points on $\St$, and $W_1,\ldots,W_m$ are continuous scalar-valued functions on $\St$. For a fixed value of $\x$, we may interpret $\{\x_j\}$ and $\{W_j\}$ as the points and weights in a certain quadrature rule
\begin{equation}\label{equ:prodquad}
\sum_{j=1}^mW_j(\x) g\left(\x_j\right)\approx \int_{\St} h(|\x-\y|)g(\y){ d}\omega(\y)
\end{equation}
for some continuous function $g$, in which the possibly singular factor $h(|\x-\y|)$ has been incorporated into the weights $W_j(\x)$ implicitly. In particular, if $h(|\x-\y|)\equiv 1$, then weights $W_j$ become constants $w_j$ and the quadrature \eqref{equ:prodquad} reduces to the classic quadrature form with scalar weights $w_j$:
\begin{equation}\label{equ:quad}
\sum_{j=1}^mw_j g\left(\x_j\right)\approx \int_{\St} g(\y){ d}\omega(\y).
\end{equation}

The goal of this paper is to propose an intrinsic scheme for the integral equation \eqref{equ:equation} on $\St$ by providing an explicit construction of $W_j(\x)$ and to establish a unified convergence theory for approximations of the form \eqref{equ:nystromform}, incorporating with the geometry (distribution information) of $\X_m:=\{\x_1,\ldots,\x_m\}\subset{\mathbb{S}^2}$. This convergence theory not only addresses convergence itself, which can be achieved using Anselone’s collectively compact operator approximation theory \cite{MR0443383} with Sloan’s modification for the singular term \cite{MR638450}, but also the \emph{rate} of convergence, derived with the aid of the authors’ recent work on hyperinterpolation \cite{an2022quadrature,an2024bypassing,an2024hyperinterpolation}. It should also be noted that Sloan’s modification in \cite{MR638450} for singular kernels was proposed for equations on the 1D interval, while we extend it to the 2D sphere in this paper. Based on the relation between spherical configuration and the error bound of hyperinterpolation, the obtained rate of convergence can be further refined to \emph{a priori} rates, provided that the geometrical information of the distribution of $\mathcal{X}_m$ is known.

The study of the distribution of $\X_m$ on the sphere has numerous applications, including climate modeling, global approximation in geophysics, and virus modeling in bioengineering, given that both the Earth and cells can be modeled as spheres. In classical numerical analysis, these points are often required to satisfy some exactness assumptions, that is, the quadrature rule using these points are exact for polynomials of certain degrees. However, in more realistic applications, these points may not fulfill this desired assumption. To cover both cases, in this paper, we assume $\X_m$ to satisfy the Marcinkiewicz--Zygmund property. Then many spherical configurations are feasible for the construction of quadrature rule \eqref{equ:quad} as well as the weights $W_j(\x)$, such as randomly distributed points \cite{MR2475947}, equal area points \cite{MR1306011}, minimal energy points \cite{MR1845243}, maximal determinant points (Fekete points) \cite{MR1845243}, spherical $t$-designs \cite{delsarte1991geometriae}, and so on. Even for a set of scattered points, we can extract its geometrical information, namely mesh norm, for the Marcinkiewicz--Zygmund property of polynomials of certain degrees, see, for example, \cite{filbir2011marcinkiewicz,mhaskar2001spherical}.

In Section \ref{sec:proposedscheme}, we propose our scheme and justify the underlying assumptions. Section \ref{sec:error} establishes the convergence theory. Section \ref{sec:implementation} discusses three specific singular kernels and addresses techniques for computing with them. Numerical experiments are presented in Section \ref{sec:numerical}. Finally, Section \ref{sec:discussion} provides concluding discussions.

\section{Proposed scheme}\label{sec:proposedscheme}

We denote by $\{Y_{\ell,k}:k=1,\ldots,2\ell+1,~\ell=0,1,2,\ldots\}$ the $L^2$-orthogonal system of real spherical harmonics of degree $\ell$ and order $k$ \cite{MR0199449}, and let $\mathbb{P}_n$ be the polynomial space of degree $n$ that is spanned by spherical harmonics of degree at most $n$. Let $C(\St)$ be the space of continuous functions on $\St$ equipped with the uniform norm 
$\|g\|_{L^\infty}:=\max_{\x\in\St}|g(\x)|$.
\subsection{Hyperinterpolation}
For a continuous function $g\in C(\St)$, its hyperinterpolation of degree $n$ is defined as
\begin{equation}\label{equ:hyper}
\mathcal{L}_ng:= \sum_{\ell=0}^n\sum_{k = 1}^{2\ell+1} \left\langle g,Y_{\ell,k}\right\rangle_m Y_{\ell,k},
\end{equation}
where 
$$\langle g,Y_{\ell,k}\rangle_m:=\sum_{j=1}^m w_j g(\x_j)Y_{\ell,k}(\x_j)$$
is numerically evaluated by the quadrature rule \eqref{equ:quad} with quadrature points $\X_m$ and positive quadrature weights $w_j>0$, $j=1,\ldots,m$. The approximation scheme is due to Sloan \cite{sloan1995polynomial}, in which the quadrature rule \eqref{equ:quad} is assumed to be exact for polynomials (spherical harmonics) of degree at most $2n$, that is, 
\begin{equation}\label{equ:exact}
\sum_{j=1}^mw_jg(\x_j) = \int_{\St}\chi(\x)d\omega(\x)\quad \forall \chi\in\mathbb{P}_{2n}.
\end{equation}
Regarding this very restrictive nature of quadrature exactness, it is impractical and sometimes impossible to obtain data on the desired quadrature points in practice. In a series of recent works \cite{an2022quadrature,an2024bypassing,an2024hyperinterpolation} by the authors, this exactness assumption can be relaxed to the following Marcinkiewicz--Zygmund property: We assume that there exists an $\eta\in[0,1)$, which is independent of $n$ and $\chi$, such that
\begin{equation}\label{equ:MZ}
(1-\eta) \int_{\mathbb{S}^2} \chi^2 d \omega \leq \sum_{j=1}^m w_j \chi\left(\x_j\right)^2 \leq(1+\eta) \int_{\mathbb{S}^2} \chi^2 d \omega \quad \forall \chi \in \mathbb{P}_n.
\end{equation}
If the quadrature exactness \eqref{equ:exact} is satisfied, then $\eta=0$. Then the construction of hyperinterpolation is feasible with many more quadrature rules outside the traditional candidates. This property \eqref{equ:MZ} can be regarded as the Marcinkiewicz--Zygmund inequality \cite{MR3746524,filbir2011marcinkiewicz,MR2475947,Marcinkiewicz1937,MR2086950,mhaskar2001spherical,MR1863015}  applied to polynomials $\chi^2$ of degree at most $2n$ with $\chi\in\mathbb{P}_n$.

In our previous work \cite{an2024bypassing} on hyperinterpolation (referred to \emph{unfettered hyperinterpolation} there), if the $m$-point positive-weight quadrature rule \eqref{equ:quad} satisfies the Marcinkiewicz--Zygmund property \eqref{equ:MZ} with $\eta\in[0,1)$, then
\begin{equation}\label{equ:hyperstability}
\|\mathcal{L}_ng\|_{L^2}\leq\sqrt{1+\eta}\left(\sum_{j=1}^mw_j\right)^{1/2}\|g\|_{\infty}
\end{equation}
and
 \begin{equation}\label{equ:hypererror}
\|\mathcal{L}_ng-g\|_{\Lt}\leq \left(\sqrt{1+\eta}\left(\sum_{j=1}^mw_j\right)^{1/2}+\lvert\St\rvert^{1/2}\right)E_n(g)+\sqrt{\eta^2+4\eta}\|\chi^*\|_{\Lt},
\end{equation} 
where $E_n(g)$ denotes the best uniform error of $g$ by a polynomial in $\mathbb{P}_n(\St)$ and $\chi^*\in\mathbb{P}_n(\St)$ denotes the best approximation polynomial of $g$ in $\mathbb{P}_n(\St)$ in the sense of $\|g-\chi^*\|_{L^\infty}=E_n(g)$.

\subsection{Numerical scheme}

Our scheme proposes the replacement of the term $K(\x,\y)\varphi(\y)$ with its \emph{hyperinterpolation} of degree $n$, forming the basis of our approach. Therefore, for the integral operator in the equation \eqref{equ:equation}, we have its approximation
\begin{equation}\label{equ:derivation}\begin{split}
\int_{\St}h(|\x-\y|)\mathcal{L}_n&\left(K(\x,\y)\varphi(\y)\right){ d}\omega(\y)\\
& =  \int_{\St}h(|\x-\y|)\left(\sum_{\ell=0}^n\sum_{k=1}^{2\ell+1}\left\langle K(\x,\cdot)\varphi,Y_{\ell,k}\right\rangle_mY_{\ell,k}(\y)\right){ d}\omega(\y)\\
& = \sum_{\ell=0}^n\sum_{k=1}^{2\ell+1} \left(\int_{\St}h(|\x-\y|) Y_{\ell,k}(\y){ d}\omega(\y)\right)\left\langle K(\x,\cdot)\varphi,Y_{\ell,k}\right\rangle_m\\
& = \sum_{j=1}^mw_j\left(\sum_{\ell=0}^n\sum_{k=1}^{2\ell+1} \left(\int_{\St}h(|\x-\y|) Y_{\ell,k}(\y){ d}\omega(\y)\right)Y_{\ell,k}(\x_j)\right) K(\x,\x_j)\varphi(\x_j)\\
&=:\sum_{j=1}^mW_j(\x)K(\x,\x_j)\varphi(\x_j),
\end{split}\end{equation}
where 
\begin{equation}\label{equ:Wj}
W_j(\x):= w_j\left(\sum_{\ell=0}^n\sum_{k=1}^{2\ell+1} \left(\int_{\St}h(|\x-\y|) Y_{\ell,k}(\y){ d}\omega(\y)\right)Y_{\ell,k}(\x_j)\right).
\end{equation}

Together with the particular weights \eqref{equ:Wj} of our proposed method, the solution $\varphi_{\gamma}(\bm{t})$ at an arbitrary point $\bm{t}\in\St$ of approximate form \eqref{equ:nystromform} can be solved in two stages: firstly, one sets $\bm{t} = \x_j$, $j=1,\ldots,m$, then numerically solves the obtained system of linear equations
\begin{equation}\label{equ:scheme1}
\varphi_{\gamma}(\x_i) - \sum_{j=1}^mW_j(\x_i)K(\x_i,\x_j)\varphi_{\gamma}(\x_j) = f(\x_i),\quad i = 1,\ldots,m
\end{equation}
for the quantities $\varphi_{\gamma}(\x_j)$, $j=1,\ldots,m$; secondly, the value of $\varphi_{\gamma}(\bm{t})$ at any $\bm{t}\in\St$ can be evaluated by the direct usage of \eqref{equ:nystromform},
\begin{equation}\label{equ:scheme2}
\varphi_{\gamma}(\bm{t})=f(\bm{t})+\sum_{j=1}^mW_j(\bm{t})K(\bm{t},\x_j)\varphi_{\gamma}(\x_j).
\end{equation}
In our two-stage scheme \eqref{equ:scheme1} and \eqref{equ:scheme2}, the triple index $\gamma := (m,n,\eta)$ suggests that the numerical solution $\varphi_{\gamma}$ depends on the number $m$ of quadrature points, degree $n$ of hyperinterpolation, and constant $\eta$ in the Marcinkiewicz--Zygmund property.


\subsection{Assumptions and their justification}

Let the integral operator occurred in \eqref{equ:equation} be denoted by
$$(A\varphi)(\x):=\int_{\St}h(|\x-\y|)K(\x,\y)\varphi(\y)d\omega(\y),\quad \varphi\in C(\St),\quad \x \in \St.$$
We also denote by
$$(A_{\gamma}\varphi)(\x):=\sum_{j=1}^mW_j(\x)K(\x,\x_j)\varphi(\x_j),\quad \varphi\in C(\St),\quad \x \in \St.$$

The common theoretical setting for the convergence study is the Banach space $C(\St)$. The kernel $h$ is assumed to be weakly singular, i.e., $h$ is continuous for all $\x,\y\in \St$ with $\x\neq \y$, and there exists positive constants $M$ and $\alpha \in (0,2]$ such that 
\begin{equation}\label{equ:weaklyasingular}
|h(|\x-\y|)|\leq M|\x-\y|^{\alpha-2}.
\end{equation}
These conditions ensure, by the Arzelà--Ascoli theorem, that the integral operator $A$ is a compact integral operator on $C(\St)$. It is assumed that the homogeneous equation corresponding to \eqref{equ:equation} has no non-trivial solution, then the Riesz theory confirms that the inhomogeneous equation \eqref{equ:equation} has a unique solution continuously depending on $f$. The detailed introduction on the analysis in this common setting can be found in, e.g., \cite[Chapters 2 and 3]{MR1723850}.

In the implementation of hyperinterpolation, we assume that all quadrature weights are positive, i.e., $w_j>0$ for all $j=1,2,\ldots,m$, and they satisfy $\sum_{j=1}^mw_j<C$ for some constant $C$. Additionally, we assume the quadrature rule \eqref{equ:quad} to satisfy the Marcinkiewicz--Zygmund property \eqref{equ:MZ}. Let $\Gamma$ denote the set of triples
\begin{equation*}\begin{split}
\Gamma:=\{&\gamma=(m,n,\eta):~\text{the $m$-point quadrature \eqref{equ:quad} satisfies the Marcinkiewicz--Zygmund}\\&\text{property \eqref{equ:MZ} with degree $n$ and constant $\eta$}\},
\end{split}\end{equation*}
and we assume $\gamma\in\Gamma$.

In this paper, the weakly singular assumption \eqref{equ:weaklyasingular} of $h$ is strengthened to 
\begin{equation}\label{equ:h}
h(2^{1/2}(1-t)^{1/2})\in L^1(-1,1)\cap L^2(-1,1)
\end{equation}
for $t \in (-1,1)$:
\begin{itemize}
\item  The $L^1$ assumption is indeed implied by weak singularity \eqref{equ:weaklyasingular} of $h$, with the aid of the Funk--Hecke formula (see, e.g., \cite[Theorem 2.22]{MR2934227}).
\begin{lemma}[Funk--Hecke formula]
Let $g\in L^1(-1,1)$ and $\x \in \St$. Then
$$\int_{\St}g(\x\cdot\y) Y_{\ell,k}(\y){ d}\omega(\y) = \mu_{\ell} Y_{\ell,k}(\x), 
$$
where
\begin{equation*}
\mu_{\ell}:= 2\pi \int_{-1}^1g(2^{1/2}(1-t)^{1/2})P_{\ell}(t){ d}t,
\end{equation*}
and $P_{\ell}(t)$ is the standard Legendre polynomial of degree $\ell$.
\end{lemma}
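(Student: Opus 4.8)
The plan is to prove the Funk--Hecke formula by exploiting the rotational symmetry of the zonal integral operator and then pinning down the eigenvalue with the addition theorem. Writing the kernel as a profile of the inner product, I set $G(t):=g(2^{1/2}(1-t)^{1/2})$, so that the integrand on the left is $G(\x\cdot\y)$ and $G\in L^1(-1,1)$; since $|P_\ell|\le 1$ on $[-1,1]$, the scalar $\mu_\ell=2\pi\int_{-1}^1 G(t)P_\ell(t)\,dt$ is finite and each inner integral converges absolutely because the $Y_{\ell,k}$ are bounded. I would then introduce the operator $(T_G f)(\x):=\int_{\St}G(\x\cdot\y)f(\y)\,d\omega(\y)$. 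The structural observation to start from is that $T_G$ commutes with rotations: because $R\x\cdot R\y=\x\cdot\y$ for every $R\in SO(3)$ and $d\omega$ is rotation-invariant, one has $T_G(f\circ R)=(T_G f)\circ R$.

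Next I would reduce the claim to a scalar action on each space $\mathcal{H}_\ell$ of spherical harmonics of degree $\ell$. Because $T_G$ commutes with the $SO(3)$ action, it preserves each isotypic component; as every irreducible representation of $SO(3)$ occurs in $\Lt(\St)$ with multiplicity one, realized precisely by $\mathcal{H}_\ell$, the operator leaves each $\mathcal{H}_\ell$ invariant, and Schur's lemma then forces $T_G|_{\mathcal{H}_\ell}=\mu_\ell I$ for a scalar $\mu_\ell$ independent of the order $k$. This already yields $\int_{\St}G(\x\cdot\y)Y_{\ell,k}(\y)\,d\omega(\y)=\mu_\ell Y_{\ell,k}(\x)$, and it only remains to identify $\mu_\ell$.

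To evaluate $\mu_\ell$ I would multiply the eigen-relation by $Y_{\ell,k}(\x)$, sum over $k$, and invoke the addition theorem $\sum_{k=1}^{2\ell+1}Y_{\ell,k}(\x)Y_{\ell,k}(\y)=\frac{2\ell+1}{4\pi}P_\ell(\x\cdot\y)$. Using $P_\ell(1)=1$ collapses the right-hand side and gives $\mu_\ell=\int_{\St}G(\x\cdot\y)P_\ell(\x\cdot\y)\,d\omega(\y)$. Parametrizing $\y$ in spherical coordinates about $\x$ and substituting $t=\x\cdot\y=\cos\theta$ integrates out the azimuthal angle to produce the factor $2\pi$, so $\mu_\ell=2\pi\int_{-1}^1 G(t)P_\ell(t)\,dt$; unwinding $G(t)=g(2^{1/2}(1-t)^{1/2})$ recovers the stated expression and reconciles it with the distance form $|\x-\y|=2^{1/2}(1-\x\cdot\y)^{1/2}$ appearing in \eqref{equ:equation}.

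I expect the main obstacle to be the rigorous justification of the invariance and Schur step under only the $L^1$ regularity of $G$: one must verify that $T_G$ is well defined on $C(\St)$ and genuinely preserves each finite-dimensional $\mathcal{H}_\ell$ before the representation-theoretic argument applies. The integrability is mild here, since $P_\ell$ and the $Y_{\ell,k}$ are bounded so the inner integrals converge absolutely, but the commutation identity must be checked carefully rather than assumed. The cleaner-looking alternative of expanding the zonal kernel in a Legendre series and using orthogonality of $\{Y_{\ell,k}\}$ would require controlling convergence of that series, which is awkward at $L^1$ regularity, so I would favor the symmetry route.
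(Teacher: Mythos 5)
The paper does not prove this lemma at all: it is quoted as a classical result with a pointer to \cite[Theorem 2.22]{MR2934227}, so there is no in-paper argument to compare against. Your proof is correct and is essentially the standard textbook one: rotation-invariance of the zonal operator $T_G$, multiplicity-one of the $SO(3)$-isotypic decomposition of $\Lt(\St)$ plus Schur's lemma to get $T_G|_{\mathcal{H}_\ell}=\mu_\ell I$, and the addition theorem with $P_\ell(1)=1$ to identify $\mu_\ell=2\pi\int_{-1}^1 G(t)P_\ell(t)\,dt$. Two remarks. First, you silently repair a notational inconsistency in the statement: as printed, the integrand is $g(\x\cdot\y)$ while the moment involves $g(2^{1/2}(1-t)^{1/2})$, which do not match; your reading (integrand $=g(|\x-\y|)=G(\x\cdot\y)$ with $G(t):=g(2^{1/2}(1-t)^{1/2})$ and hypothesis $G\in L^1(-1,1)$) is the one consistent with how the paper actually uses the lemma in \eqref{equ:L1assumption} and \eqref{equ:mm}, and it is worth stating that choice explicitly. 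Second, the $L^1$ caveat you flag is real but easily discharged: since $|T_Gf(\x)|\le 2\pi\|G\|_{L^1(-1,1)}\|f\|_{L^\infty}$, the operator is well defined and bounded from $C(\St)$ into $L^\infty(\St)\subset\Lt(\St)$, the commutation with rotations follows from invariance of $\x\cdot\y$ and of $d\omega$, and invariance of each $\mathcal{H}_\ell$ follows because the composition of $T_G|_{\mathcal{H}_\ell}$ with the orthogonal projection onto $\mathcal{H}_{\ell'}$ is an intertwiner between inequivalent irreducibles for $\ell'\ne\ell$, hence zero. Alternatively, and perhaps more elementarily, one can prove the identity first for continuous $G$ (or for $G=P_j$ via the addition theorem and orthogonality) and pass to general $G\in L^1(-1,1)$ by density, using that $P_\ell$ and the $Y_{\ell,k}$ are bounded so both sides are continuous in $G$ with respect to the $L^1$ norm; this avoids representation theory entirely and is closer to the cited source.
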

Thus, since $h(2^{1/2}(1-t)^{1/2}) = h(\sqrt{2(1-\x\cdot\y)}) = h(|\x-\y|)$, for $\alpha\in(0,2]$,
\begin{equation}\label{equ:L1assumption}\begin{split}
\int_1^1|h(2^{1/2}(1-t)^{1/2})|dt&=\frac{1}{2\pi}\int_{\St}|h(|\x-\y|)|d\omega(\y)\leq\frac{M}{2\pi}\int_{\St}|\x-\y|^{\alpha-2}d\omega(\y)\\
&\leq \frac{M}{2\pi}2^{(\alpha-2)/2}\int_{-1}^1|1-t|^{(\alpha-2)/2}dt<\infty.
\end{split}\end{equation}
\item The $L^2$ assumption is the only assumption that differs from the common setting in this paper. As we shall see from the subsequent theoretical sections, this assumption enables us to apply the $L^2$ theory of hyperinterpolation. A sufficient condition for this $L^2$ assumption, as shown in the process of \eqref{equ:L1assumption}, is $\alpha\in(0,1]$.
\end{itemize}

Besides, the $L^1$ assumption allows us to apply the Funk--Hecke formula \cite[Theorem 2.22]{MR2934227} to compute the weights \eqref{equ:Wj} efficiently by analytically evaluating the modified moments in the weights \eqref{equ:Wj} as 
\begin{equation}\label{equ:mm}
\int_{\St}h(|\x-\y|) Y_{\ell,k}(\y){ d}\omega(\y) = \mu_{\ell} Y_{\ell,k}(\x), 
\end{equation}
where
\begin{equation*}
\mu_{\ell}:= 2\pi \int_{-1}^1h(2^{1/2}(1-t)^{1/2})P_{\ell}(t){ d}t.
\end{equation*}

\section{Error analysis}\label{sec:error}

Now we present the error analysis for our two-stage scheme \eqref{equ:scheme1} and \eqref{equ:scheme2} in solving the integral equation \eqref{equ:equation}.

\begin{theorem}[Main theorem]\label{thm:main}
Let $\gamma = (m,n,\eta) \in \Gamma$ with sufficiently large $n$ and sufficiently small $\eta$. Then for the integral equation \eqref{equ:equation} with continuous kernel $K$, weakly singular kernel $h$ satisfying \eqref{equ:h}, and continuous inhomogeneous term $f$, the numerical scheme \eqref{equ:scheme1} and \eqref{equ:scheme2} computes an approximate solution $\varphi_{\gamma}$ satisfying
\begin{equation}\label{equ:stability}
\|\varphi_{\gamma}\|_{L^{\infty}}\leq C_1(m,n,\eta)\|f\|_{L^{\infty}},
\end{equation}
where $C_1(m,n,\eta)>0$ is some constant decreasing as $n$ grows or $\eta$ decreases.
Moreover, there exists $\x_0\in\St$ such that
\begin{equation}\label{equ:error}
\|\varphi_{\gamma}-\varphi\|_{L^{\infty}}\leq C_2(m,n,\eta)\left(E_n(K(\x_0,\cdot)\varphi) + \sqrt{\eta^2+4\eta}\|\chi^*\|_{L^{2}}\right),
\end{equation}
where $C_2(m,n,\eta)>0$ is some constant decreasing as $n$ grows or $\eta$ decreases, and $\chi^*$ stands for the best approximation polynomial of $K(\x_0,\cdot)\varphi(\cdot)$ in $\Pn$. In other words, $\varphi_{\gamma}$ uniformly converges to $\varphi$ as $n\rightarrow\infty$ and $\eta\rightarrow0$ simultaneously.
\end{theorem}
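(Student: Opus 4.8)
The plan is to recast the scheme as an operator equation and run the standard second-kind perturbation argument. By the derivation \eqref{equ:derivation}, the discrete operator acts as
\[
(A_\gamma\psi)(\x)=\int_{\St}h(|\x-\y|)\,\mathcal{L}_n\big(K(\x,\cdot)\psi\big)(\y)\,d\omega(\y),\qquad \psi\in C(\St),
\]
so \eqref{equ:scheme2} is exactly $(I-A_\gamma)\varphi_\gamma=f$, while unique solvability of the collocation system \eqref{equ:scheme1} together with the reconstruction \eqref{equ:scheme2} will follow once $I-A_\gamma$ is invertible on $C(\St)$ (Nystr\"om interpolation). Subtracting from the exact equation $(I-A)\varphi=f$ gives the identity $(I-A_\gamma)(\varphi_\gamma-\varphi)=(A_\gamma-A)\varphi$. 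Hence everything reduces to two ingredients: (i) a uniform bound $\|(I-A_\gamma)^{-1}\|\le C_1(m,n,\eta)$ for $n$ large and $\eta$ small, which gives \eqref{equ:stability} at once through $\varphi_\gamma=(I-A_\gamma)^{-1}f$; and (ii) an explicit bound on $\|(A_\gamma-A)\varphi\|_{L^{\infty}}$ reproducing the bracket in \eqref{equ:error}.

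For ingredient (ii) I would establish a single pointwise estimate that serves throughout. For any $\psi\in C(\St)$,
\[
((A_\gamma-A)\psi)(\x)=\int_{\St}h(|\x-\y|)\Big(\mathcal{L}_n(K(\x,\cdot)\psi)-K(\x,\cdot)\psi\Big)(\y)\,d\omega(\y),
\]
so Cauchy--Schwarz yields $|((A_\gamma-A)\psi)(\x)|\le \|h(|\x-\cdot|)\|_{\Lt}\,\|\mathcal{L}_n(K(\x,\cdot)\psi)-K(\x,\cdot)\psi\|_{\Lt}$. Here the $L^2$ assumption \eqref{equ:h} is precisely what makes $c_h:=\sup_{\x\in\St}\|h(|\x-\cdot|)\|_{\Lt}$ finite and, by rotational symmetry, $\x$-independent, while the second factor is controlled by the hyperinterpolation error bound \eqref{equ:hypererror} applied to $g=K(\x,\cdot)\psi$. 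Taking $\psi=\varphi$ and letting $\x_0\in\St$ attain the maximum of the continuous map $\x\mapsto|((A_\gamma-A)\varphi)(\x)|$ produces exactly the terms $E_n(K(\x_0,\cdot)\varphi)$ and $\sqrt{\eta^2+4\eta}\|\chi^*\|_{\Lt}$ of \eqref{equ:error}; the same inequality, combined with $E_n(K(\x,\cdot)\psi)\to0$, shows $A_\gamma\to A$ pointwise on $C(\St)$.

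The heart of the argument is ingredient (i), which I would obtain from Anselone's collectively compact operator approximation theory \cite{MR0443383}. Three facts are needed. First, uniform boundedness $\|A_\gamma\|\le c_h\sqrt{1+\eta}\big(\sum_{j}w_{j}\big)^{1/2}\|K\|_{L^{\infty}}=:C_0$, from Cauchy--Schwarz, the $L^2$-stability \eqref{equ:hyperstability}, and $\sum_{j}w_{j}<C$. Second, the pointwise convergence just obtained. Third, the collective compactness of $\{A_\gamma\}$, which by Arzel\`a--Ascoli reduces to equicontinuity of $\{A_\gamma\psi:\|\psi\|_{L^{\infty}}\le1\}$ uniform in $\gamma$. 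I would split $(A_\gamma\psi)(\x)-(A_\gamma\psi)(\x')$ into a kernel-difference term, bounded via Cauchy--Schwarz by $\|h(|\x-\cdot|)-h(|\x'-\cdot|)\|_{\Lt}$ times the stable $L^2$-norm of the hyperinterpolant, and a hyperinterpolant-difference term $\mathcal{L}_n\big((K(\x,\cdot)-K(\x',\cdot))\psi\big)$, bounded via \eqref{equ:hyperstability} by the modulus of continuity of $K$. Granting these, the theory delivers $\|(A-A_\gamma)A_\gamma\|\to0$ and hence, for $n$ large and $\eta$ small, the existence and the uniform bound $\|(I-A_\gamma)^{-1}\|\le\big(1+\|(I-A)^{-1}\|\,C_0\big)/\big(1-\|(I-A)^{-1}\|\,\|(A-A_\gamma)A_\gamma\|\big)=:C_1(m,n,\eta)$, which decreases toward $\|(I-A)^{-1}\|$ as $n$ grows and $\eta\to0$. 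The constant $C_2$ in \eqref{equ:error} is then $C_1$ times $c_h$ times the prefactor of $E_n$ in \eqref{equ:hypererror}, inheriting the same monotonicity.

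I expect the main obstacle to be the kernel-difference term in the equicontinuity step, namely showing $\|h(|\x-\cdot|)-h(|\x'-\cdot|)\|_{\Lt}\to0$ as $\x'\to\x$ uniformly; this is the genuine two-dimensional counterpart of Sloan's modification \cite{MR638450} for the singular term. Since $h(|\x-\y|)$ depends on $\x,\y$ only through $\x\cdot\y$, I would reduce it to strong continuity on $L^2(\St)$ of the rotation action applied to the fixed function $t\mapsto h(2^{1/2}(1-t)^{1/2})$, which lies in $L^2$ by \eqref{equ:h}. The $L^2$ assumption is thus used twice---for boundedness and for this translation continuity---and it is the step most delicately tied to the singular behaviour of $h$.
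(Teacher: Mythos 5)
Your proposal is correct and follows essentially the same route as the paper: the operator identity $(I-A_\gamma)(\varphi_\gamma-\varphi)=(A_\gamma-A)\varphi$, a Cauchy--Schwarz plus hyperinterpolation-error bound \eqref{equ:hypererror} for the consistency term, and Anselone-type collectively compact operator theory (uniform boundedness, equicontinuity via the $L^2$-continuity of $\x\mapsto h(|\x-\cdot|)$, pointwise convergence) to get invertibility and a uniform bound on $(I-A_\gamma)^{-1}$. The delicate point you flag---$\|h(|\x-\cdot|)-h(|\x'-\cdot|)\|_{L^2}\to 0$---is exactly the step the paper handles, there via the Funk--Hecke expansion $\sum_{\ell,k}\mu_\ell^2\bigl(Y_{\ell,k}(\tilde{\x})-Y_{\ell,k}(\x)\bigr)^2$ rather than your rotation-continuity argument, but the two are equivalent.
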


\begin{remark}
The constant $\eta$ is the error bound \eqref{equ:error} can be extracted as a geometrical information of $\mathcal{X}_m=\{\x_j\}_{j=1}^m$. If the quadrature rule \eqref{equ:quad} using these point satisfies the quadrature exactness assumption \eqref{equ:exact}, then $\eta=0$. For general $\mathcal{X}_m$, we define the \emph{mesh norm} of $\mathcal{X}_m\subset\mathbb{S}^{d-1}$ as
$$h_{\mathcal{X}_m} : =\arccos(\x\cdot\x_j).$$
In other words, the mesh norm can be regarded as the geodesic radius of the largest hole in the mesh $\mathcal{X}_m$. Thus, it was investigated in \cite{filbir2011marcinkiewicz,mhaskar2001spherical} that the Marcinkiewicz--Zygmund property \eqref{equ:MZ} holds if 
\begin{equation*}
n\lesssim\frac{\eta}{2h_{\mathcal{X}_{m}}}.
\end{equation*}
Moreover, this property \eqref{equ:MZ} holds even when $\mathcal{X}_m$ consists of random points. When the quadrature rule \eqref{equ:quad} is equal-weight, it was shown in \cite{MR2475947} that if an independent random sample of $m$ points drawn from the measure $d\omega$, then there exists a constant $\bar{c}:=\bar{c}(\gamma)$ such that the MZ inequality \eqref{equ:MZ} holds with probability exceeding $1-\bar{c}n^{-\gamma}$ on the condition of 
$$m\geq \bar{c} \frac{n ^{d-1}\log{n}}{\eta^2}.$$

\end{remark}

\subsection{Proof of the main theorem}

The integral equation \eqref{equ:equation} can be written as an operator equation
\begin{equation}\label{equ:A}
\varphi-A\varphi=f,
\end{equation}
and its solution is approximated, using our two-stage scheme \eqref{equ:scheme1} and \eqref{equ:scheme2}, by the solution of 
\begin{equation}\label{equ:Am}
\varphi_{\gamma} - A_{\gamma}\varphi_{\gamma}=f.
\end{equation}
Subtracting \eqref{equ:A} from \eqref{equ:Am} gives
\begin{equation}\label{equ:subtracting}
(I-A_{\gamma})(\varphi_{\gamma}-\varphi)=(A_{\gamma}-A)\varphi.
\end{equation}
The existence of $\varphi_{\gamma}$ and the error bound of $\|\varphi_{\gamma}-\varphi\|_{L^{\infty}}$ depend on the existence and boundedness of $(I-A_{\gamma})^{-1}$. This fact can be confirmed by the following lemma. Note that $\|A\|$ denotes the $L^{\infty}$ operator norm for an operator $A:C(\St)\rightarrow C(\St)$.

\begin{lemma}\label{lem:main}
Assume that operators $A_{\gamma}$ are compact and $\gamma\in\Gamma$ such that the sequence $\{A_{\gamma}\}$ satisfies
\begin{equation}\label{equ:etaA}
\|(I-A)^{-1}(A_{\gamma}-A)A_{\gamma}\|< 1,
\end{equation}
Then the inverse operators $(I-A_{\gamma})^{-1}:C(\St)\rightarrow C(\St)$ exist and are bounded by
\begin{equation}\label{equ:I-Am}
\|(I-A_{\gamma})^{-1}\|\leq \frac{1+\|(I-A)^{-1}A_{\gamma}\|}{1-\|(I-A)^{-1}(A_{\gamma}-A)A_{\gamma}\|}.
\end{equation}
For solutions of the equations
$$\varphi-A\varphi =f\quad \text{and}\quad \varphi_{\gamma}-A_{\gamma}\varphi_{\gamma}=f,$$
we have the error estimate
\begin{equation}\label{equ:lemerror}
\|\varphi_{\gamma}-\varphi\|_{L^{\infty}}\leq \frac{1+\|(I-A)^{-1}A_{\gamma}\|}{1-\|(I-A)^{-1}(A_{\gamma}-A)A_{\gamma}\|} \|(A_{\gamma}-A)\varphi\|_{L^{\infty}}.
\end{equation}
\end{lemma}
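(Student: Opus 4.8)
The plan is to treat this as a standard perturbation argument for second-kind operator equations, in the spirit of the collectively compact operator theory of Atkinson \cite{MR1723850}. The central difficulty is that the approximation $A_{\gamma}$ does not converge to $A$ in operator norm --- only in a pointwise, collectively compact sense --- so one cannot simply invert $I-A_{\gamma}$ by a Neumann series expansion around $I-A$ based on $\|(I-A)^{-1}(A-A_{\gamma})\|$, which need not be small. Assumption \eqref{equ:etaA} circumvents this by controlling the composite quantity $\|(I-A)^{-1}(A_{\gamma}-A)A_{\gamma}\|$ instead; the extra factor $A_{\gamma}$ is precisely what allows the compactness of $A_{\gamma}$ to render this quantity small, and arranging the algebra so that exactly this quantity appears is the crux of the argument.

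First I would construct an explicit candidate for a one-sided inverse of $I-A_{\gamma}$. Since $(I-A)^{-1}$ exists and is bounded (by the Riesz theory hypothesis attached to \eqref{equ:equation}), set $S_{\gamma}:= I + (I-A)^{-1}A_{\gamma}$, a bounded operator with $\|S_{\gamma}\| \leq 1 + \|(I-A)^{-1}A_{\gamma}\|$. The key algebraic step is to verify the identity
\[
S_{\gamma}(I - A_{\gamma}) = I - (I-A)^{-1}(A_{\gamma} - A)A_{\gamma},
\]
which follows by expanding $S_{\gamma}(I-A_{\gamma})$, using $(I-A)^{-1}-I = (I-A)^{-1}A$ to combine the two terms carrying a single factor $A_{\gamma}$, and recognizing that $AA_{\gamma} - A_{\gamma}^2 = -(A_{\gamma}-A)A_{\gamma}$. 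Writing $U_{\gamma}$ for the right-hand side, assumption \eqref{equ:etaA} gives $\|U_{\gamma} - I\| < 1$, so the Neumann series shows $U_{\gamma}$ is boundedly invertible with $\|U_{\gamma}^{-1}\| \leq \bigl(1 - \|(I-A)^{-1}(A_{\gamma}-A)A_{\gamma}\|\bigr)^{-1}$.

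Consequently $U_{\gamma}^{-1}S_{\gamma}$ is a bounded left inverse of $I-A_{\gamma}$, whence $I-A_{\gamma}$ is injective. Because $A_{\gamma}$ is compact, the Riesz--Fredholm theory applies to $I-A_{\gamma}$: injectivity forces bijectivity and bounded invertibility, and the left inverse must coincide with the genuine inverse, so $(I-A_{\gamma})^{-1} = U_{\gamma}^{-1}S_{\gamma}$. Taking norms and combining the two bounds above yields \eqref{equ:I-Am}. For the error estimate I would begin from the subtracted identity \eqref{equ:subtracting}, namely $(I-A_{\gamma})(\varphi_{\gamma}-\varphi) = (A_{\gamma}-A)\varphi$, apply $(I-A_{\gamma})^{-1}$ to both sides, and take $L^{\infty}$ norms, invoking \eqref{equ:I-Am} to bound $\|(I-A_{\gamma})^{-1}\|$; this step is immediate and produces \eqref{equ:lemerror}. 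I expect the only genuinely delicate point to be the verification that the composite identity isolates exactly the quantity appearing in \eqref{equ:etaA}; the passage from a left inverse to a full inverse and all the norm estimates are then routine bookkeeping.
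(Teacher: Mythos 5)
Your proposal is correct and follows essentially the same route as the paper: the same approximate inverse $I+(I-A)^{-1}A_{\gamma}$, the same composite identity isolating $(I-A)^{-1}(A_{\gamma}-A)A_{\gamma}$, a Neumann-series inversion, and Riesz theory to upgrade injectivity of $I-A_{\gamma}$ to bounded invertibility. The only difference is notational (you call $S_{\gamma}$ what the paper calls $B_{\gamma}$), which does not affect the argument.
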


\begin{proof}
Recall that $A$ is a compact operator, and the equation \eqref{equ:A} has a unique solution $\varphi$ continuously depending on $f$. By Riesz's theorem (cf. \cite[Theorem 3.4]{MR1723850}), the inverse operator $(I-A)^{-1}:C(\St)\rightarrow C(\St)$ exists and is bounded. Then the identity
$$(I-A)^{-1} = I + (I-A)^{-1}A$$
suggests that 
$$B_{\gamma}:=I+(I-A)^{-1}A_{\gamma}$$
is an approximate inverse for $I-A_{\gamma}$. Note that 
\begin{equation*}\begin{split}
(I-A)B_{\gamma}(I-A_{\gamma})
& = (I-A)(I+(I-A)^{-1}A_{\gamma} -A_{\gamma} -(I-A)^{-1}A_{\gamma}A_{\gamma})\\
& = (I-A) + A_{\gamma} -(I-A)A_{\gamma} - A_{\gamma}A_{\gamma}\\
& = (I-A) - (A_{\gamma}-A)A_{\gamma},
\end{split}\end{equation*}
which is equivalent to
\begin{equation}\label{equ:Bm}
B_{\gamma}(I-A_{\gamma}) = I-S_{\gamma},
\end{equation}
where 
$$S_{\gamma}:= (I-A)^{-1}(A_{\gamma}-A)A_{\gamma}.$$
From assumption \eqref{equ:etaA} that $\|S_{\gamma}\|<1$, the Neumann series implies that $(I-S_{\gamma})^{-1}$ exists and is bounded by 
$$\|(I-S_{\gamma})^{-1}\|\leq \frac{1}{1-\|S_{\gamma}\|}.$$
Therefore, the equation \eqref{equ:Bm} suggests that $I-A_{\gamma}$ is an injection and then, since $A_{\gamma}$ is assumed to be compact, $(I-A_{\gamma})^{-1}$ exists. Equation \eqref{equ:Bm} also yields 
$$(I-A_{\gamma})^{-1} = (I-S_{\gamma})^{-1}B_{\gamma},$$
leading to the estimate \eqref{equ:I-Am}. The error estimate \eqref{equ:lemerror} then follows from \eqref{equ:subtracting}.
\end{proof}

\begin{remark}
This lemma is motivated by the works of Brakhage \cite{MR0129147} and Anselone and Moore \cite{MR0184448}. In their works, the assumption \eqref{equ:etaA} is ensured by assuming the sequence $\{A_{\gamma}\}$ to be \emph{collectively compact} and \emph{pointwise convergent}, essentially relying on a convergent quadrature rule \eqref{equ:quad}. The whole procedure can be also found in Kress's monograph \cite{MR1723850} on integral equations. In our work, instead of working with a convergent quadrature rule, we try to ensure the assumption \eqref{equ:etaA} by controlling the geometry of $\{\x_j\}_{j=1}^m$ and choosing an appropriate degree $n$ of hyperinterpolation. The assumption \eqref{equ:etaA} is not ensured by the convergence of quadrature rules but the convergence of hyperinterpolation. The details are provided below.
\end{remark}

The assumption \eqref{equ:etaA} is ensured if we can determine $m$, $n$, and $\eta$ such that
\begin{equation}\label{equ:aug}
c\|(A_{\gamma}-A)A_{\gamma}\|<1,
\end{equation}
where $c$ is the upper bound of $\|(I-A)^{-1}\|$. For this goal, we first investigate some properties of the sequence $\{A_{\gamma}\}$.

\begin{lemma}\label{lem:colcom}
The sequence $\{A_{\gamma}\}_{\gamma\in\Gamma}$ is collectively compact, i.e., for each bounded $U\subset C(\St)$, the image set $\{A_{\gamma}\varphi:\varphi\in U, \gamma\in\Gamma\}$ is relatively compact.
\end{lemma}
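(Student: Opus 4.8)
The plan is to verify the two hypotheses of the Arzel\`a--Ascoli theorem for the image family $\mathcal{F}:=\{A_{\gamma}\varphi:\varphi\in U,~\gamma\in\Gamma\}\subset C(\St)$, namely uniform boundedness and equicontinuity; since $\St$ is a compact metric space, these together yield relative compactness. Each $A_{\gamma}\varphi$ is continuous because, by \eqref{equ:Wj}, it is a finite sum of the polynomials $W_j$ times the continuous functions $K(\cdot,\x_j)$. The starting point is the representation read off from \eqref{equ:derivation},
$$(A_{\gamma}\varphi)(\x) = \int_{\St}h(|\x-\y|)\,p_{\x}(\y)\,{ d}\omega(\y),\qquad p_{\x}:=\mathcal{L}_n\bigl(K(\x,\cdot)\varphi\bigr),$$
which exhibits $A_{\gamma}\varphi$ as the singular integral against a degree-$n$ hyperinterpolant in the $\y$ variable. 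The two quantitative inputs are: (i) by the $L^2$ part of assumption \eqref{equ:h} and the zonal reduction used in \eqref{equ:L1assumption}, $\int_{\St}|h(|\x-\y|)|^2{ d}\omega(\y)=2\pi\int_{-1}^1|h(2^{1/2}(1-t)^{1/2})|^2{ d}t=:C_h^2<\infty$, independently of $\x$; and (ii) the $L^2$-stability \eqref{equ:hyperstability}, which gives $\|p_{\x}\|_{L^2}\le\sqrt{1+\eta}\bigl(\sum_{j=1}^m w_j\bigr)^{1/2}\|K(\x,\cdot)\varphi\|_{L^{\infty}}$ with a constant independent of the degree $n$.

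For uniform boundedness I would apply Cauchy--Schwarz to the representation:
$$|(A_{\gamma}\varphi)(\x)|\le C_h\,\|p_{\x}\|_{L^2}\le C_h\sqrt{1+\eta}\Bigl(\sum_{j=1}^m w_j\Bigr)^{1/2}\|K\|_{L^{\infty}}\|\varphi\|_{L^{\infty}}.$$
Using $\eta<1$, the standing bound $\sum_{j=1}^m w_j<C$, the continuity (hence boundedness) of $K$ on the compact set $\St\times\St$, and the boundedness of $U$, the right-hand side is dominated by a constant depending only on $C_h$, $C$, $\|K\|_{L^{\infty}}$, and $\sup_{\varphi\in U}\|\varphi\|_{L^{\infty}}$, hence uniformly over $\x$, $\varphi\in U$, and $\gamma\in\Gamma$.

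For equicontinuity I would estimate, for $\x,\x'\in\St$, the difference $(A_{\gamma}\varphi)(\x)-(A_{\gamma}\varphi)(\x')$ via the splitting
$$\int_{\St}\bigl[h(|\x-\y|)-h(|\x'-\y|)\bigr]p_{\x}(\y)\,{ d}\omega(\y)+\int_{\St}h(|\x'-\y|)\bigl[p_{\x}(\y)-p_{\x'}(\y)\bigr]\,{ d}\omega(\y),$$
and bound each term by Cauchy--Schwarz. The first is at most $\|h(|\x-\cdot|)-h(|\x'-\cdot|)\|_{L^2}\,\|p_{\x}\|_{L^2}$; writing $h(|\x-\y|)=H(\x\cdot\y)$ with $H\in L^2(-1,1)$, rotational invariance of ${ d}\omega$ shows the $L^2$-norm of the kernel difference depends only on the geodesic distance $\arccos(\x\cdot\x')$, and strong continuity of the rotation action on $L^2(\St)$ forces it to $0$ as $\x'\to\x$, uniformly in the location of $\x,\x'$. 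For the second term, linearity of $\mathcal{L}_n$ gives $p_{\x}-p_{\x'}=\mathcal{L}_n\bigl((K(\x,\cdot)-K(\x',\cdot))\varphi\bigr)$, so \eqref{equ:hyperstability} bounds it by $C_h\sqrt{1+\eta}\bigl(\sum_{j=1}^m w_j\bigr)^{1/2}\|\varphi\|_{L^{\infty}}\max_{\y}|K(\x,\y)-K(\x',\y)|$, and uniform continuity of $K$ on $\St\times\St$ drives the maximum to $0$ as $\x'\to\x$. Both estimates are uniform in $\varphi\in U$ and $\gamma\in\Gamma$, giving equicontinuity and, with Arzel\`a--Ascoli, the claim.

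The main obstacle---and the reason the $L^2$ strengthening \eqref{equ:h} is needed rather than the bare weak singularity \eqref{equ:weaklyasingular}---is securing every bound uniformly in the hyperinterpolation degree $n$, which is unbounded over $\Gamma$. Pairing the singular kernel against $p_{\x}$ in $L^2$ (instead of using $\|h\|_{L^1}\|p_{\x}\|_{L^{\infty}}$, which would incur an $n$-dependent inverse/Nikolskii factor) is exactly what keeps the constants $n$-free, since \eqref{equ:hyperstability} controls $\|p_{\x}\|_{L^2}$ with a constant independent of $n$. The only remaining delicate point is the simultaneous $\x$-dependence of both the kernel and the hyperinterpolant in the equicontinuity step, which the two-term splitting isolates cleanly.
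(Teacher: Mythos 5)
Your proof is correct and follows essentially the same route as the paper: Arzel\`a--Ascoli, with uniform boundedness obtained by pairing the square-integrable kernel $h(|\x-\cdot|)$ against the hyperinterpolant via Cauchy--Schwarz and the stability bound \eqref{equ:hyperstability}, and equicontinuity from a two-term splitting whose pieces (the $h$-difference and the $K$-difference) correspond exactly to the paper's decomposition into $\sum_j (W_j(\tilde{\x})-W_j(\x))K(\x,\x_j)\varphi(\x_j)$ and $\sum_j W_j(\tilde{\x})(K(\tilde{\x},\x_j)-K(\x,\x_j))\varphi(\x_j)$. The only cosmetic difference is that the paper routes its bounds through $\sum_{j}|W_j(\x)|$ by introducing an auxiliary sign function $g$ with $g(\x_j)=\pm 1$, whereas you work directly with $\int_{\St} h(|\x-\y|)\,\mathcal{L}_n(K(\x,\cdot)\varphi)(\y)\,d\omega(\y)$; your justification that $\|h(|\x-\cdot|)-h(|\x'-\cdot|)\|_{L^2}\to 0$ uniformly via rotational invariance and strong continuity of the rotation action on $L^2(\St)$ is, if anything, a cleaner account of the step the paper argues through the spherical harmonic expansion of this same quantity.
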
 

\begin{proof}
Recall the construction \eqref{equ:derivation} of $W_j(\x)$. Replacing $K(\x,\y)\varphi(\y)$ with a continuous function $g(\y)$ with $\|g\|_{L^{\infty}}=1$ satisfying, for $j=1,2,\ldots,m$, 
\begin{equation*}
g(\x_j)=\begin{cases}
1&\text{if }W_j(\x)\geq 0,\\
-1&\text{if }W_j(\x)< 0.
\end{cases}
\end{equation*}
Then for all $\x\in\St$ and all $\gamma \in \Gamma$,
\begin{equation*}\begin{split}
\sum_{j=1}^m|W_j(\x)|&=\sum_{j=1}^mW_j(\x)g(\x_j) = \int_{\St}h(|\x-\y|)\mathcal{L}_ng(\y)d\omega(\y) \\
&\leq \left(2\pi\int_{-1}^1|h(2^{1/2}(1-t)^{1/2})|^2dt\right)^{1/2}\sqrt{1+\eta}\left(\sum_{j=1}^mw_j\right)^{1/2}\leq C
\end{split}\end{equation*}
for some constant $C$, where we apply the Cauchy--Schwarz inequality, the Funk--Hecke formula, and the stability result \eqref{equ:hyperstability} of hyperinterpolation. Thus we can estimate
\begin{equation}\label{equ:Ambounded}
\|A_{\gamma}\varphi\|_{L^{\infty}}\leq C\max_{\x,\y\in\St}|K(\x,\y)|\|\varphi\|_{L^{\infty}}.
\end{equation}

Similarly defining another $g$ for $W_j(\tilde{\x})-W_j(\x)$, we have, for all $\tilde{\x},\x\in\St$ and all $\gamma \in \Gamma$,
\begin{equation*}\begin{split}
\sum_{j=1}^m|W_j(\tilde{\x})-W_j(\x)| 
&= \sum_{j=1}^m(W_j(\tilde{\x})-W_j(\x))g(\x_j) \\
& = \sum_{\ell=0}^n\sum_{k=1}^{2\ell+1} \left(\int_{\St}\left(h(|\tilde{\x}-\y|)-h(|\x-\y|)\right) Y_{\ell,k}(\y){ d}\omega(\y)\right)\left\langle g,Y_{\ell,k}\right\rangle_m\\
& = \sum_{\ell=0}^n\sum_{k=1}^{2\ell+1} \mu_{\ell}\left(Y_{\ell,k}(\tilde{\x}) - Y_{\ell,k}({\x})\right)\left\langle g,Y_{\ell,k}\right\rangle_m\\
& \leq \left(\sum_{\ell=0}^n\sum_{k=1}^{2\ell+1} \mu_{\ell}^2\left(Y_{\ell,k}(\tilde{\x}) - Y_{\ell,k}({\x})\right)^2\right)^{1/2}\|\mathcal{L}_ng\|_{L^{2}},
\end{split}\end{equation*} 
where 
$$\mu_{\ell} = 2\pi\int_{-1}^1h(2^{1/2}(1-t)^{1/2})P_{\ell}(t)dt.$$
Since 
$$\left(\sum_{\ell=0}^{\infty}\sum_{k=1}^{2\ell+1} \mu_{\ell}^2\left(Y_{\ell,k}(\tilde{\x}) - Y_{\ell,k}({\x})\right)^2\right)^{1/2}
=  \|h(|\tilde{\x}-\y|)-h(|\x-\y|)\|_{L^2_{\y}}<\infty$$
and each $Y_{\ell,k}$ is uniformly continuous, we have 
\begin{equation}\label{equ:Amconv}
\lim_{\tilde{\x}\rightarrow \x}\sup_{\gamma\in\Gamma}\sum_{j=1}^m|W_j(\tilde{\x})-W_j(\x)| =0
\end{equation}
uniformly for all $\x\in \St$. Then, from
\begin{equation*}
(A_{\gamma}\varphi)(\tilde{\x}) - (A_{\gamma}\varphi)(\x) = \sum_{j=1}^mW_j(\tilde{\x})(K(\tilde{\x},\x_j) - K({\x},\x_j))\varphi(\x_j) +\sum_{j=1}^m(W_j(\tilde{\x})-W_j(\x))K(\x,\x_j)\varphi(\x_j),
\end{equation*}
we can estimate
\begin{equation*}\begin{split}
|(A_{\gamma}\varphi)(\tilde{\x}) - &(A_{\gamma}\varphi)(\x)|\leq\\
& C \max_{\y\in\St}|K(\tilde{\x},\y)- K(\x,\y)|\|\varphi\|_{L^{\infty}}+\max_{\x,\y\in\St}|K(\x,\y)|\sup_{\gamma\in\Gamma}\sum_{j=1}^m|W_j(\tilde{\x})-W_j(\x)|\|\varphi\|_{L^{\infty}}
\end{split}\end{equation*}
for all $\tilde{\x},\x\in\St$. Now let $U\in C(\St)$ be bounded. Then from \eqref{equ:Ambounded} and \eqref{equ:Amconv}, we see that $\{A_{\gamma}\varphi:\varphi\in U, \gamma\in\Gamma\}$ is bounded and equicontinuous because $K$ is uniformly continuous on $\St\times \St$ and $\sup_{\gamma\in\Gamma}\sum_{j=1}^m|W_j(\tilde{\x})-W_j(\x)|$ converges uniformly to 0 as $\tilde{\x}$ tends to $\x$ for all $\x\in\St$ and all $\varphi\in U$. Then by the Arzel\`{a}--Ascoli theorem, $\{A_{\gamma}\varphi:\varphi\in U, \gamma\in\Gamma\}$ is relatively compact and hence the sequence $\{A_{\gamma}\}_{\gamma\in\Gamma}$ is collectively compact.
\end{proof}

In the following lemma, we investigate the limiting case where $n\rightarrow\infty$ and $\eta\rightarrow0$ simultaneously. For $\gamma\in\Gamma$, due to the Markinciewicz--Zygmund property \eqref{equ:MZ}, this case renders $m\rightarrow\infty$.

\begin{lemma}\label{lem:conv}
The sequence $\{A_{\gamma}\}_{\gamma\in\Gamma}$ is pointwise convergence as $n\rightarrow\infty$ and $\eta\rightarrow0$ simultaneously, i.e., $A_{\gamma}\varphi\rightarrow A\varphi$ for all $\varphi\in C(\St)$ as $n\rightarrow\infty$ and $\eta\rightarrow0$ simultaneously.
\end{lemma}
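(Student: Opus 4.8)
The plan is to reduce the difference $A_\gamma\varphi-A\varphi$ to a hyperinterpolation error that can be controlled by the $\Lt$ estimate \eqref{equ:hypererror}, and then to show that this error tends to $0$ \emph{uniformly in} $\x$. First I would fix $\varphi\in C(\St)$ and, for each $\x\in\St$, introduce the continuous function $g_{\x}(\y):=K(\x,\y)\varphi(\y)$. The construction \eqref{equ:derivation} gives the identity $(A_\gamma\varphi)(\x)=\int_{\St}h(|\x-\y|)\mathcal{L}_ng_{\x}(\y)\,d\omega(\y)$, so that
$$(A_\gamma\varphi)(\x)-(A\varphi)(\x)=\int_{\St}h(|\x-\y|)\bigl(\mathcal{L}_ng_{\x}(\y)-g_{\x}(\y)\bigr)\,d\omega(\y).$$
Applying the Cauchy--Schwarz inequality and invoking the $\Lt$ part of assumption \eqref{equ:h} together with the Funk--Hecke formula (exactly as in the proof of Lemma \ref{lem:colcom}) yields
$$|(A_\gamma\varphi)(\x)-(A\varphi)(\x)|\le C_h\,\|\mathcal{L}_ng_{\x}-g_{\x}\|_{\Lt},\qquad C_h:=\left(2\pi\int_{-1}^1\bigl|h(2^{1/2}(1-t)^{1/2})\bigr|^2\,dt\right)^{1/2},$$
where $C_h<\infty$ is independent of $\x$ by rotational symmetry.

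Next I would feed $g_{\x}$ into the hyperinterpolation error bound \eqref{equ:hypererror}, obtaining
$$\|\mathcal{L}_ng_{\x}-g_{\x}\|_{\Lt}\le\left(\sqrt{1+\eta}\left(\sum_{j=1}^mw_j\right)^{1/2}+|\St|^{1/2}\right)E_n(g_{\x})+\sqrt{\eta^2+4\eta}\,\|\chi_{\x}^*\|_{\Lt},$$
with $\chi_{\x}^*$ the best polynomial approximant of $g_{\x}$ in $\Pn$. Since $\sum_jw_j<C$ and $\eta\in[0,1)$, the prefactor of $E_n(g_{\x})$ is uniformly bounded; moreover $\|\chi_{\x}^*\|_{\Lt}\le|\St|^{1/2}E_n(g_{\x})+\|g_{\x}\|_{\Lt}$ is uniformly bounded in $\x$ because $\|g_{\x}\|_{L^\infty}\le\|K\|_{L^\infty}\|\varphi\|_{L^\infty}$. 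Taking the maximum over $\x$, the term carrying $\sqrt{\eta^2+4\eta}$ vanishes as $\eta\to0$, so it remains only to handle the first term.

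The crux is therefore to prove $\sup_{\x\in\St}E_n(g_{\x})\to0$ as $n\to\infty$, i.e.\ uniform-in-$\x$ polynomial approximability. I would derive this from the equicontinuity of the family $\{g_{\x}\}_{\x\in\St}$: the decomposition
$$|g_{\x}(\y_1)-g_{\x}(\y_2)|\le\|K\|_{L^\infty}|\varphi(\y_1)-\varphi(\y_2)|+\|\varphi\|_{L^\infty}|K(\x,\y_1)-K(\x,\y_2)|$$
shows that all the $g_{\x}$ share a common modulus of continuity, since $K$ is uniformly continuous on the compact set $\St\times\St$ and $\varphi$ is uniformly continuous on $\St$. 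A Jackson-type theorem for spherical polynomials then gives $E_n(g_{\x})\lesssim\omega(g_{\x};1/n)$ with a bound uniform in $\x$; alternatively, $\{g_{\x}\}$ is relatively compact in $C(\St)$ by Arzel\`a--Ascoli, and since $f\mapsto E_n(f)$ is $1$-Lipschitz in $\|\cdot\|_{L^\infty}$ and decreases to $0$ pointwise, Dini's theorem forces uniform convergence on this compact family.

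Combining the three displays and letting $n\to\infty$ and $\eta\to0$ simultaneously gives $\|A_\gamma\varphi-A\varphi\|_{L^\infty}\to0$, which is the asserted pointwise convergence of $\{A_\gamma\}$. The main obstacle is precisely this uniformity in $\x$: the decay of $E_n(g_{\x})$ for each fixed $\x$ is immediate from continuity of $g_{\x}$, but upgrading it to a bound uniform over $\x$ — and hence to convergence in the $L^\infty$ (operator) sense rather than merely at each fixed evaluation point — is what genuinely requires the equicontinuity together with the Jackson (or Dini) argument.
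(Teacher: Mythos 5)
Your proposal is correct, and its first half (the identity $(A_\gamma\varphi)(\x)=\int_{\St}h(|\x-\y|)\mathcal{L}_ng_{\x}(\y)\,d\omega(\y)$, Cauchy--Schwarz with the $L^2$ assumption \eqref{equ:h}, and the hyperinterpolation error bound \eqref{equ:hypererror}) coincides exactly with the paper's argument. Where you diverge is in how the bound at each fixed $\x$ is upgraded to convergence in the $C(\St)$ norm. The paper stops at the pointwise-in-$\x$ estimate and then invokes the equicontinuity of the family $\{A_\gamma\varphi\}_{\gamma\in\Gamma}$ \emph{as functions of} $\x$ (already established inside the proof of Lemma \ref{lem:colcom} via the uniform control of $\sum_j|W_j(\tilde{\x})-W_j(\x)|$), using the standard fact that pointwise convergence of an equicontinuous family on a compact set is uniform. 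You instead make the right-hand side itself uniform in $\x$: you observe that $\{g_{\x}\}_{\x\in\St}$ shares a common modulus of continuity because $K$ is uniformly continuous on $\St\times\St$, and then conclude $\sup_{\x}E_n(g_{\x})\to0$ by a Jackson-type theorem (or, equivalently, by Arzel\`a--Ascoli plus Dini applied to the $1$-Lipschitz functionals $E_n$), together with the uniform boundedness of $\|\chi_{\x}^*\|_{L^2}$. Both routes are sound. Yours is self-contained (it does not lean on the equicontinuity computation from Lemma \ref{lem:colcom}) and, via Jackson, yields a quantitative rate $\sup_{\x}E_n(g_{\x})\lesssim\omega(1/n)$ in terms of the joint modulus of continuity of $K(\cdot,\cdot)\varphi(\cdot)$, which is slightly more informative; the paper's route is shorter given that the equicontinuity work has already been done, and reuses machinery needed anyway for collective compactness. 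Your closing remark correctly identifies the uniformity in $\x$ as the genuine content of the step.
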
 

\begin{proof}
For fixed $\varphi\in C(\St)$, we show that the sequence $\{A_{\gamma}\varphi\}_{\gamma\in \Gamma}$ is pointwise convergent as $n\rightarrow\infty$ and $\eta\rightarrow0$, i.e., $(A_{\gamma}\varphi)(\x)\rightarrow (A\varphi)(\x)$ in the limiting case, for all $\x\in\St$. Indeed, for each $\x\in\St$, the $L^2$ error analysis of hyperinterpolation indicates that 
\begin{equation*}\begin{split}
|(A_{\gamma}\varphi)(\x) - (A\varphi)(\x)|
&= \left|\int_{\St}h(|\x-\y|)\mathcal{L}_n(K(\x,\y)\varphi(\y))d\omega(\y) -\int_{\St}h(|\x-\y|)K(\x,\y)\varphi(\y)d\omega(\y)  \right|\\
& \leq \|h(\x,\y)\|_{L^2_{\y}} \|\mathcal{L}_n(K(\x,\y)\varphi(\y)) -  K(\x,\y)\varphi(\y)\|_{L^2_{\y}}\\
& \leq C \|h(\x,\y)\|_{L^2_{\y}} \left(E_n(K(\x,\y)\varphi) + \sqrt{\eta^2+4\eta}\|\chi^*\|_{L^{2}_{\y}}\right)
\end{split}\end{equation*}
for some constant $C>0$, where $\chi^*(\y)$ stands for the best approximation polynomial of $K(\x,\y)\varphi(\y)$ in $\Pn$. 

As shown in the above proof of Lemma \ref{lem:colcom}, the sequence $\{A_{\gamma}\varphi\}_{\gamma\in \Gamma}$ is equicontinuous. Hence the convergence of this sequence can be upgraded to uniform convergence, i.e., for fixed $\varphi\in C(\St)$, $\|A_{\gamma}\varphi - A\varphi\|_{L^{\infty}}\rightarrow 0$ as $n\rightarrow\infty$ and $\eta\rightarrow0$ simultaneously. Therefore, we have the pointwise convergence of the sequence $\{A_{\gamma}\}_{\gamma\in \Gamma}$ of numerical integration operators, that is, $A_{\gamma}\varphi\rightarrow A\varphi$, for all $\varphi\in C(\St)$, as $n\rightarrow\infty$ and $\eta\rightarrow0$.
\end{proof}

Restricting to a compact set $U\subset C(\St)$, the next lemma shows that the pointwise convergence of $\{A_{\gamma}\}_{\gamma\in \Gamma}$ can be further upgraded to uniform convergence.

\begin{lemma}\label{lem:unifboundedcoro}
Let $U$ be a compact subset of $C(\St)$. Then $\{A_{\gamma}\}_{\gamma\in \Gamma}$ has uniform convergence on $U$ to $A$, i.e., 
$$\sup_{\varphi\in U}\|A_{\gamma}\varphi - A\varphi\|_{L^{\infty}}\rightarrow 0$$
as $n\rightarrow\infty$ and $\eta\rightarrow0$ simultaneously.
\end{lemma}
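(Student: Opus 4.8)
The plan is to run the standard $\varepsilon/3$ argument that upgrades pointwise convergence to uniform convergence on compacta, using the uniform boundedness of the family $\{A_\gamma\}$ already secured in the proof of Lemma \ref{lem:colcom}. Concretely, the bound \eqref{equ:Ambounded} gives $\sup_{\gamma\in\Gamma}\|A_\gamma\|_{L^\infty\to L^\infty}\le C\max_{\x,\y\in\St}|K(\x,\y)|=:L_1<\infty$, and since $A$ is a bounded operator on $C(\St)$ we may set $L:=L_1+\|A\|_{L^\infty\to L^\infty}<\infty$. Both $A_\gamma$ and $A$ are linear, so for any $\varphi,\psi\in C(\St)$ we have $\|(A_\gamma-A)\varphi-(A_\gamma-A)\psi\|_{L^\infty}\le L\|\varphi-\psi\|_{L^\infty}$, uniformly in $\gamma$.

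Next I would exploit the compactness of $U$. Fix $\varepsilon>0$ and choose a finite $\delta$-net $\varphi_1,\dots,\varphi_N\in U$ with $\delta:=\varepsilon/(3L)$, so that every $\varphi\in U$ satisfies $\|\varphi-\varphi_i\|_{L^\infty}<\delta$ for some index $i=i(\varphi)$. For such $\varphi$ I would split
\begin{equation*}
\|A_\gamma\varphi-A\varphi\|_{L^\infty}\le \|(A_\gamma-A)(\varphi-\varphi_i)\|_{L^\infty}+\|A_\gamma\varphi_i-A\varphi_i\|_{L^\infty},
\end{equation*}
where the first term is bounded by $L\|\varphi-\varphi_i\|_{L^\infty}<L\delta=\varepsilon/3$ via the Lipschitz estimate above, independently of $\gamma$ and of which $\varphi\in U$ was selected.

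It remains to control the finitely many middle terms. By Lemma \ref{lem:conv}, for each fixed $i$ we have $\|A_\gamma\varphi_i-A\varphi_i\|_{L^\infty}\to 0$ as $n\to\infty$ and $\eta\to0$ simultaneously; hence for each $i$ there is a threshold (a lower bound on $n$ and an upper bound on $\eta$) beyond which $\|A_\gamma\varphi_i-A\varphi_i\|_{L^\infty}<\varepsilon/3$. Because there are only $N$ such conditions, taking the most stringent one (largest $n$-threshold and smallest $\eta$-threshold) makes all of them hold at once for $\gamma\in\Gamma$ with $n$ large enough and $\eta$ small enough. Combining the two bounds yields $\|A_\gamma\varphi-A\varphi\|_{L^\infty}<\varepsilon$ for \emph{every} $\varphi\in U$, which is exactly $\sup_{\varphi\in U}\|A_\gamma\varphi-A\varphi\|_{L^\infty}\to0$. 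The only point requiring care --- and thus the main (mild) obstacle --- is making the ``simultaneous'' limit in the directed index set $\Gamma$ precise; this is harmless here precisely because the net is finite, so only finitely many thresholds must be reconciled. The substantive inputs are the already-established uniform boundedness \eqref{equ:Ambounded} and the pointwise convergence of Lemma \ref{lem:conv}.
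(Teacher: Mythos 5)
Your proof is correct and follows essentially the same route as the paper's: a finite $\varepsilon/3$-net of the compact set $U$, a uniform bound on the operator norms drawn from \eqref{equ:Ambounded}, and the pointwise convergence of Lemma \ref{lem:conv} applied to the finitely many net centers. The only cosmetic difference is that you merge the two ``net'' terms into a single Lipschitz estimate for $A_\gamma-A$, while the paper bounds $\|A_\gamma\|\,\|\varphi-\varphi_k\|_{L^\infty}$ and $\|A\|\,\|\varphi-\varphi_k\|_{L^\infty}$ separately.
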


\begin{proof}
From the bound \eqref{equ:Ambounded} on each $A_{\gamma}$, the uniform boundedness principle results in a uniform bound on the operator norms of all $A_{\gamma}$, i.e., there exists some constant $C>0$ such that $\|A_{\gamma}\|\leq C$ for all $\gamma\in\Gamma$. For any $\varepsilon>0$, consider open balls $B(\varphi,r)=\{\psi\in C(\St):\|\varphi-\psi\|_{L^{\infty}}<r\}$ with center $\varphi\in C(\St)$ and radius $r = \varepsilon/(3C)$. Due to the compactness of $U$, there exists a finite covering such that
$$U\subset \bigcup_{k=1}^{M} B(\varphi_k,r).$$
Pointwise convergence of $\{A_{\gamma}\}$, as shown in Lemma \ref{lem:conv}, suggests that there exists $(N(\varepsilon),H(\varepsilon))$ such that 
$$\|A_{\gamma}\varphi_k - A\varphi_k\|_{L^{\infty}}<\frac{\varepsilon}{3}$$
for all $n\geq N(\varepsilon)$ and $\eta\geq H(\varepsilon)$, and for all $k=1,\ldots,M$. Now let $\varphi\in U$ be arbitrary. Then $\varphi$ is contained in some ball $B(\varphi_k,r)$ with center $\varphi_k$. Hence for all $n\geq N(\varepsilon)$ and $\eta\geq H(\varepsilon)$, we have 
\begin{equation*}\begin{split}
\|A_{\gamma}\varphi-A\varphi\|_{L^{\infty}} 
&\leq \|A_{\gamma}\varphi-A_{\gamma}\varphi_k\|_{L^{\infty}} + \|A_{\gamma}\varphi_k-A\varphi_k\|_{L^{\infty}}  +\|A\varphi_k-A\varphi\|_{L^{\infty}} \\
& \leq \|A_{\gamma}\|\|\varphi-\varphi_k\|_{L^{\infty}} + \frac{\varepsilon}{3} + \|A\|\|\varphi-\varphi_k\|_{L^{\infty}}\\
&\leq 2Cr + \frac{\varepsilon}{3} = \varepsilon.
\end{split}\end{equation*}
Therefore, the convergence of $\{A_{\gamma}\}_{\gamma\in \Gamma}$ is uniform on $U$. 
\end{proof}

\begin{lemma}\label{lem:auglem}
$\|(A_{\gamma}-A)A_{\gamma}\|\rightarrow 0$ as $n\rightarrow\infty$ and $\eta\rightarrow0$ simultaneously.
\end{lemma}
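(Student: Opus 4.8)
The plan is to exploit, in the classical Anselone style, the three structural facts already established: collective compactness (Lemma \ref{lem:colcom}), pointwise convergence (Lemma \ref{lem:conv}), and uniform convergence on compact sets (Lemma \ref{lem:unifboundedcoro}). Writing out the operator norm as
$$\|(A_{\gamma}-A)A_{\gamma}\| = \sup_{\|\varphi\|_{L^{\infty}}\leq 1}\|(A_{\gamma}-A)(A_{\gamma}\varphi)\|_{L^{\infty}},$$
the crucial observation is that the argument $A_{\gamma}\varphi$ always lands inside a single fixed precompact set that does not depend on $\gamma$, which is precisely what collective compactness supplies.

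First I would let $B:=\{\varphi\in C(\St):\|\varphi\|_{L^{\infty}}\leq 1\}$ be the closed unit ball and define
$$U:=\overline{\{A_{\gamma}\varphi:\varphi\in B,\ \gamma\in\Gamma\}}.$$
By Lemma \ref{lem:colcom} the family $\{A_{\gamma}\}_{\gamma\in\Gamma}$ is collectively compact, so the image set inside the closure is relatively compact and hence $U$ is a \emph{compact} subset of $C(\St)$. The essential point is that $U$ is independent of $\gamma$: it simultaneously contains $A_{\gamma}\varphi$ for every admissible $\gamma\in\Gamma$ and every $\varphi\in B$.

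Next I would bound the composition by restricting its second factor to $U$. Since $A_{\gamma}\varphi\in U$ for each $\varphi\in B$, we obtain
$$\|(A_{\gamma}-A)A_{\gamma}\| = \sup_{\varphi\in B}\|(A_{\gamma}-A)(A_{\gamma}\varphi)\|_{L^{\infty}}\leq \sup_{\psi\in U}\|(A_{\gamma}-A)\psi\|_{L^{\infty}}.$$
Finally I would invoke Lemma \ref{lem:unifboundedcoro} with this particular compact set $U$: the right-hand side is exactly $\sup_{\psi\in U}\|(A_{\gamma}-A)\psi\|_{L^{\infty}}$, which tends to $0$ as $n\to\infty$ and $\eta\to 0$ simultaneously. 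Since the left-hand side is nonnegative, a squeeze then yields $\|(A_{\gamma}-A)A_{\gamma}\|\to 0$, completing the argument.

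I expect the only delicate point to be the uniformity in $\gamma$: one must build the \emph{single} compact set $U$ as the closure of the union over all $\gamma\in\Gamma$ of the images $A_{\gamma}(B)$, and recognize that it is collective compactness—not merely compactness of each individual $A_{\gamma}$—that guarantees this union is precompact. Without it, $U$ could fail to be compact and Lemma \ref{lem:unifboundedcoro}, which demands a fixed compact domain, would not apply. Everything else is routine.
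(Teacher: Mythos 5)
Your proof is correct and follows essentially the same route as the paper's: form the single set $U$ of all images $A_{\gamma}\varphi$ over the unit ball and over all $\gamma\in\Gamma$, invoke collective compactness (Lemma \ref{lem:colcom}) to make $U$ (relatively) compact, and then apply the uniform convergence on compact sets (Lemma \ref{lem:unifboundedcoro}) to bound $\sup_{\psi\in U}\|(A_{\gamma}-A)\psi\|_{L^{\infty}}$. Your explicit passage to the closure of $U$ is a minor but welcome tightening, since Lemma \ref{lem:unifboundedcoro} is stated for compact rather than merely relatively compact sets.
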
 

\begin{proof}
Consider the set $U:=\{A_{\gamma}\varphi:\|\varphi\|_{L^{\infty}}\leq 1, \gamma\in\Gamma\}$. By Lemma \ref{lem:colcom}, the set $U$ is relatively compact. By Lemma \ref{lem:unifboundedcoro}, the convergence $A_{\gamma}\psi\rightarrow A\psi$ as $n\rightarrow\infty$ and $\eta\rightarrow0$ is uniform for all $\psi \in U$. Hence, for any $\varepsilon>0$, there exists $(N(\varepsilon),H(\varepsilon))$ such that the sequence of functions satisfies
$$\|(A_{\gamma}-A)A\varphi\|_{L^{\infty}}<\varepsilon$$
for all $n\geq N(\varepsilon)$ and $\eta\geq H(\varepsilon)$, all $\varphi\in C(\St)$ with $\|\varphi\|_{L^{\infty}}\leq 1$, and all $\gamma\in\Gamma$. Therefore, the operator norms satisfy
$$\|(A_{\gamma}-A)A\|\leq\varepsilon$$
for all $n\geq N(\varepsilon)$ and $\eta\geq H(\varepsilon)$ and all $\gamma\in\Gamma$. 
\end{proof}

We are now prepared well for the proof of our main theorem.\\

\noindent{\emph{Proof of Theorem \ref{thm:main}.} This theorem follows from Lemma \ref{lem:main}, which guarantees the existence and boundedness of $(I-A_{\gamma})^{-1}$. The assumption \eqref{equ:etaA} of Lemma \ref{lem:main} is ensured by \eqref{equ:aug}, which is a direct consequence of Lemma \ref{lem:auglem}. Therefore, we can conclude that $(I-A_{\gamma})^{-1}$ exists and is bounded, which implies the bound \eqref{equ:stability} of $\varphi_{\gamma}$. Furthermore, combining \eqref{equ:stability} with \eqref{equ:lemerror}, we obtain the error estimate \eqref{equ:error}.\hfill $\square$

\section{Implementation and examples of kernels}\label{sec:implementation}

The explicit construction of $W_j(\x)$ relies on the explicit evaluation of the modified moments \eqref{equ:mm}. These moments are explicitly available for some specific kernels. In this section, we list some singular kernels and elaborate the evaluation of the corresponding moments.

\subsection{Non-singular case}

If $h\equiv 1$, since
$$\sum_{\ell=0}^n\sum_{k=1}^{2\ell+1}\left(\int_{\St}h(|\x-\y|) Y_{\ell,k}(\y){ d}\omega(\y)\right)Y_{\ell,k}(\x_j)  = 1,$$
we have
$$W_j(\x)=w_j\left(\sum_{\ell=0}^n\sum_{k=1}^{2\ell+1} \left(\int_{\St}h(|\x-\y|) Y_{\ell,k}(\y){ d}\omega(\y)\right)Y_{\ell,k}(\x_j)\right) \equiv w_j.$$

\subsection{Singular cases}

We explore three common singular kernels in the context of integral equations.

\noindent \textbf{Case 1:} $h(|\x-\y|) = |\x-\y|^{\nu}$ with $-1<\nu<0$. The assumption $\nu>-1$ is sufficient for the $L^2$ assumption of $h$. For the case of $\nu\geq0$, it corresponds to a continuous kernel. For the modified moments \eqref{equ:mm} with this specific kernel, we use the Rodrigues representation formula
$$
P_{\ell}(t)=\frac{(-1)^{\ell}}{2^{\ell}{\ell}!}\left(\frac{d}{d t}\right)^{\ell}\left(1-t^2\right)^{\ell}
$$
for Legendre polynomial $P_{\ell}(t)$ of degree $\ell$. Then we can express the constant $\mu_{\ell}$ in the form of
$$\mu_{\ell}=2\pi\frac{(-1)^{\ell}}{2^{\ell}{\ell}!} \int_{-1}^1 2^{\nu / 2}(1-t)^{\nu / 2}\left(\frac{d}{d t}\right)^{\ell}\left(1-t^2\right)^{\ell} d t,$$
which can be further simplified through computing the integral $$I(\nu)=\int_{-1}^1(1-t)^{\nu / 2}\left(\frac{d}{d t}\right)^{\ell}\left(1-t^2\right)^{\ell} d t.$$ Under the condition $\nu>-1$, we can perform integration by parts repeatedly on $I(\nu)$ and all terms containing $(1-t^2)$ vanish at $t= \pm 1$. After integrating by parts $\ell$ times, we have $$I(\nu)=\frac{\nu}{2}\left(\frac{\nu}{2}-1\right) \cdots\left(\frac{\nu}{2}-(\ell-1)\right) J(\nu)=(-1)^{\ell}\left(-\frac{\nu}{2}\right)_{\ell} J(\nu),$$
where $(x)_n:=\frac{\Gamma(x+n)}{\Gamma(x)}$ with $x>0$ stands for Pochhammer’s symbol and
$$J(\nu):=\int_{-1}^1(1-t)^{\nu / 2-\ell}\left(1-t^2\right)^{\ell} d t=\int_{-1}^1(1-t)^{\nu / 2}(1+t)^{\ell} d t.$$
The term $J(\nu)$ can by further evaluated, via the change of variables $t=2 s-1$, as
$$J(\nu)  =2^{\ell+\frac{\nu}{2}+1} \int_0^1(1-s)^{\frac{\nu+2}{2}-1} s^{\ell} d s  =2^{\ell+\frac{\nu}{2}+1} \frac{\Gamma\left(\frac{\nu+2}{2}\right) \Gamma\left(\ell+1\right)}{\Gamma\left(\ell+\frac{\nu}{2}+2\right)} .
$$
Therefore the modified moment \eqref{equ:mm} of $h(|\x-\y|) = |\x-\y|^{\nu}$ with $-1<\nu<0$ can be evaluated analytically as
$$\mu_{\ell}= 2^{\nu+2}\pi \left(-\frac{\nu}{2}\right)_{\ell}  \frac{\Gamma\left(\frac{\nu+2}{2}\right)}{\Gamma\left(\ell+\frac{\nu}{2}+2\right)}.$$
More details on this derivation can be found in \cite[Section 3.7.1]{MR2934227}.

\noindent \textbf{Case 2:} $h(|\x-\y|) = \log|\x-\y|$. For the modified moments \eqref{equ:mm} with this specific kernel, note that $\log(2^{1/2}(1-t)^{1/2})$ satisfies the requirement \eqref{equ:h}. Thus we have 
$$\mu_{\ell}=2\pi \int_{-1}^1 \log(2^{1/2}(1-t)^{1/2})P_{\ell}(t) d t =\pi \int_{-1}^1 \log (2(1-t)) P_{\ell}(t)d t.$$
For $\ell = 0$, we have $$\mu_0 = \pi \int_{-1}^1\log(2(1-t))dt= \pi\left(2\log{2} + \int_{-1}^1\log(1-t)dt\right)= \pi(4\log{2}-2).$$
For $\ell\geq 1$, we utilize the orthogonality of Legendre polynomials and the Maclaurin series of $\log(1-t)$ for
$$\mu_\ell = -\pi \int_{-1}^1\sum_{k = 1}^{\infty}\frac{t^{k}}{k}P_{\ell}(t)dt =-\pi \sum_{k = 1}^{\ell} \frac{1}{k}\int_{-1}^1t^kP_{\ell}(t)dt,\quad \ell = 1,2,\ldots.$$
From our numerical experience, replacing $\log(1-t)$ with its Maclaurin series should be necessary. To compute integrals $\int_{-1}^1t^kP_{\ell}(t)dt$, we work with Chebfun \cite{driscoll2014chebfun}. Here is a Matlab code segment that evaluates $\mu_{\ell}$ for $\ell=0,1,\ldots,n$.

\begin{lstlisting}[language=Matlab]
x = chebfun('x');
mu(1) = pi*(4*log(2)-2);
for ell = 1:n
  mu_temp = 0;
  for k = 1:ell
    mu_temp = mu_temp + sum(x.^k.*legpoly(ell))/k;
  end
  mu(ell+1) = -pi * mu_temp;
end
\end{lstlisting}

\noindent \textbf{Case 3:} $h(|\x-\y|) = |\x-\y|^{\nu_1}|\x+\y|^{\nu_2}$ with $-1\leq \nu_1,\nu_2<0$. Similar to Case 2, we compute the modified moments \eqref{equ:mm} with this specific kernel in terms of 
$$\mu_{\ell} = 2^{\left(\nu_1+\nu_2\right) / 2}(2\pi) \int_{-1}^1(1-t)^{\nu_1 / 2}(1+t)^{\nu_2 / 2}P_{\ell}(t) dt.$$
Specifying $\nu_1$ and $\nu_2$ as \texttt{nu1} and \texttt{nu2}, a Matlab code segment for $\mu_{\ell}$ with $\ell=0,1,\ldots,n$ is provided below.
\begin{lstlisting}[language=Matlab]
x = chebfun('x');
const = 2^((nu1+nu2)/2)*2*pi;
for ell = 0:n
  mu(ell+1) = const*sum((1-x).^(nu1/2).*(1+x).^(nu2/2).*legpoly(ell));
end
\end{lstlisting}

\section{Numerical experiments}\label{sec:numerical}
Here we give numerical results for our two-stage scheme \eqref{equ:scheme1} and \eqref{equ:scheme2} in solving the integral equation \eqref{equ:equation}, defined using both non-singular kernels and the three types of singular kernels mentioned above. We investigate four kinds of point distributions on sphere:
\begin{itemize}
       \item Spherical $t$-designs \cite{delsarte1991geometriae}: a set of points $\{x_j\}_{j=1}^m\subset \St$ with the characterizing property that an equal-weight quadrature rule in these points exactly integrates all polynomials of degree at most $t$, that is,
\begin{equation}\label{equ:stdexactness}
\frac{4\pi}{m}\sum_{j=1}^m\chi(\x_j)=\int_{\St}\chi(\x)\text{d}\omega(\x)\quad\forall \chi\in\mathbb{P}_t.
\end{equation}
In our experiment, we employ well conditioned spherical $t$-designs \cite{MR2763659} to realize the approximation;
       \item Minimal energy points \cite{MR2065291,MR1845243}: a set of points $\{x_j\}_{j=1}^m\subset \St$ that minimizes the Coulomb energy $\sum_{i,j=1}^m1/\|\x_i-\x_j\|_2$;
       \item Fekete points \cite{MR2065291}: a set of points that maximizes the determinant for polynomial interpolation.
       \item Equal area points \cite{MR1306011} based on an algorithm given in \cite{MR2582801};
\end{itemize}
These four kinds of points are utilized in the first-stage \eqref{equ:scheme1} of our scheme. For the second-stage \eqref{equ:scheme2} evaluating values of the numerical solution, we adopt 5,000 uniformly distributed points on $\St$.

\noindent \textbf{Experiment 1:} $h(\x,\y)\equiv 1$. We begin by testing our scheme using the case where the solution is constant, specifically $\varphi\equiv 1$. Though $h$ is non-singular, we let the continuous kernel be defined as $K(\x,\y)=\sin(10|\x-\y|)$, exhibiting oscillatory behavior. Analytically, based on the Funk--Hecke formula, we determine that the inhomogeneous term $f$ should also be constant:
$$f(\x,\y)= 1- 2\pi\int_{-1}^1\sin\left(10\sqrt{2(1-t)}\right)dt\approx\texttt{1.455449001125579}.$$
We set the degree of hyperinterpolation to $n=20$ and choose the number of quadrature points as $m=(2n+1)^2$. Figure \ref{fig:example1} illustrates the numerical solutions of the integral equation \eqref{equ:equation}. We consider four types of quadrature points, comparing their errors with respect to the exact solution $\varphi= 1$. Notably, the numerical solutions closely approximate the constant value of 1, as indicated by the color bar ranges, with the scheme utilizing spherical $t$-designs performing the best.
\begin{figure}[htbp] 
  \centering
  \includegraphics[width=\textwidth]{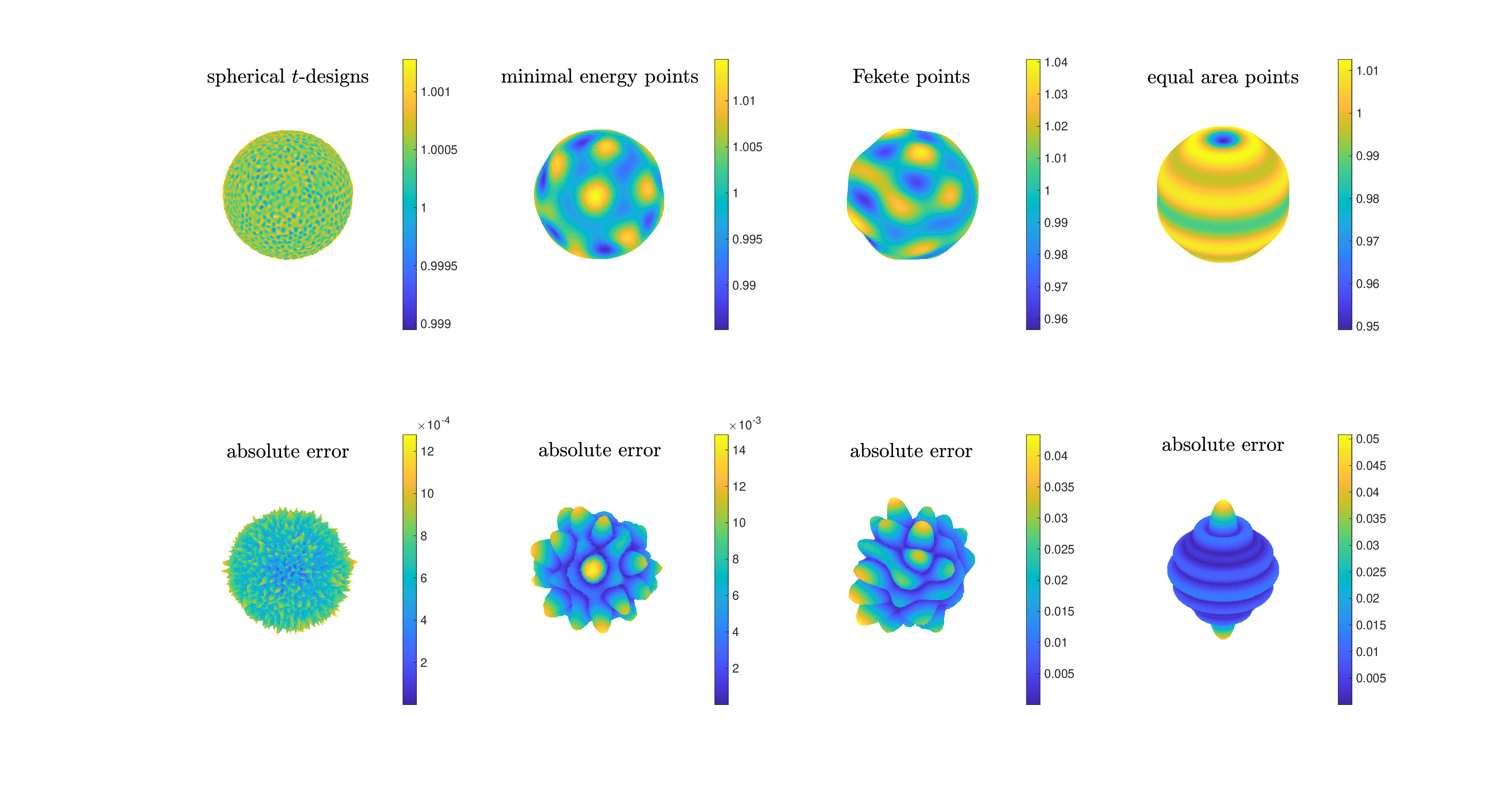}\vspace{-1cm}
  \caption{Non-singular $h=1$ and oscillatory $K(\x,\y)=\sin(10|\x-\y|)$: Numerical solutions ($n=20$ and $m = (2n+1)^2$) of the equation \eqref{equ:equation} with exact solution being constant 1 and their absolute errors.}\label{fig:example1}
\end{figure}

To further explore their comparison and validate our theoretical error analysis, we consider uniform errors ($L^{\infty}$ errors) of numerical solutions for varying $n$ and $m$. The first plot in Figure \ref{fig:example11} investigates the errors of numerical solutions of the equation considered in Figure \ref{fig:example1} with fixed $n = 20$ and varying $m$ in the sense that $m = (t + 1)^2$ for $t$ ranging from 15 to 35. The second plot demonstrates errors with $n$ ranging from 15 to 35 and $m$ varying corresponding to $n$ in terms of $m = (\lfloor 1.2n \rfloor + 1)^2$. Figure \ref{fig:example11} not only enriches the comparison results among quadrature points but also validates our theoretical analysis that the $L^{\infty}$ errors decrease as $n$ grows and the constant $\eta$ in the Marcinkiewicz--Zygmund property \eqref{equ:MZ} decreases. The decrease in $\eta$ is a consequence of an increasing number $m$ of points for a well-distributed point set. To ensure that the quadrature rule \eqref{equ:quad} using these points satisfies the Marcinkiewicz–Zygmund property \eqref{equ:MZ} for polynomials of degree at most $n$, $m$ has to increase correspondingly to $n$.

\begin{figure}[htbp] 
  \centering
  \includegraphics[width=\textwidth]{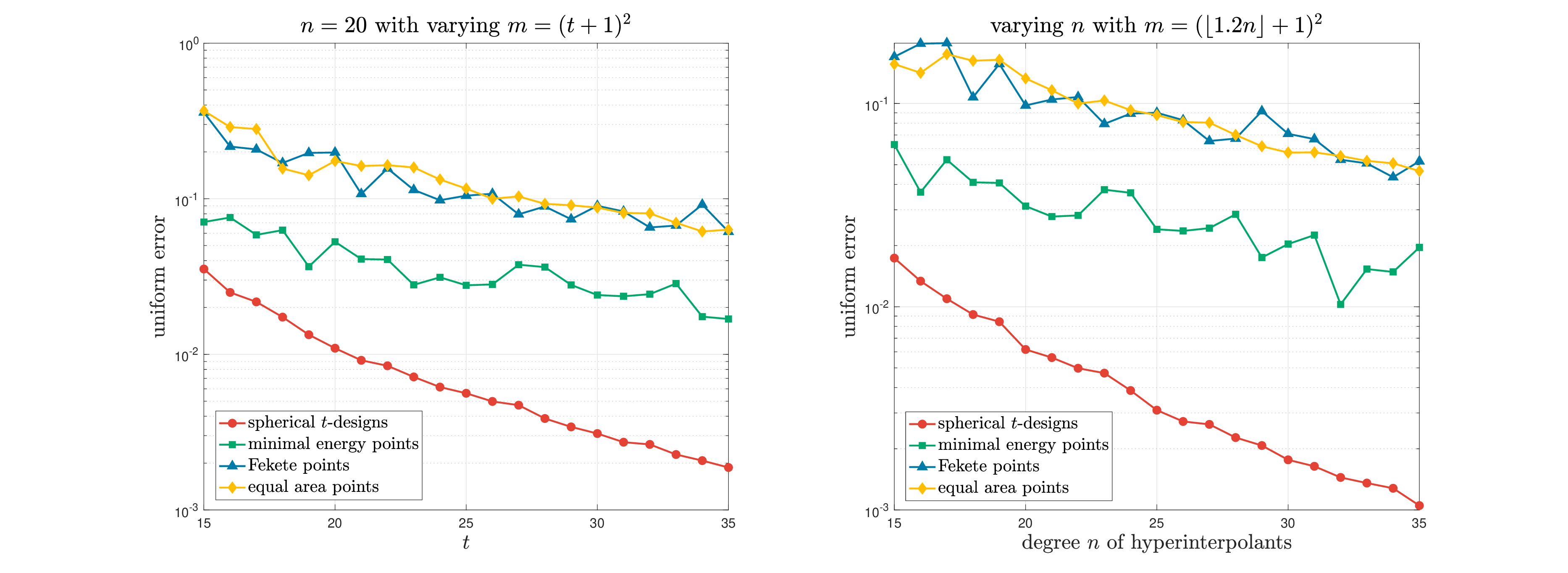}
  \caption{Non-singular $h=1$ and oscillatory $K(\x,\y)=\sin(10|\x-\y|)$: Uniform errors with different $n$ and $m$.}\label{fig:example11}
\end{figure}

\noindent \textbf{Experiment 2:} $h(\x,\y)= |\x-\y|^{\nu}$ \textbf{with} $-1<\nu<0$. We continue by considering a singular kernel $h(\x, \y) = |\x - \y|^{\nu}$ with $\nu = -0.5$. Similar to Experiment 1, we test our scheme using the case where the solution is constant, $\varphi \equiv 1$. We consider an oscillatory kernel again, $K(\x, \y) = \cos(10|\x - \y|)$. The inhomogeneous term $f$ now should be
$$f(\x, \y) \equiv 1 - 2\pi \int_{-1}^1 \left(\sqrt{2(1-t)}\right)^{-0.5} \cos\left(10\sqrt{2(1-t)}\right) dt \approx \texttt{0.303738699125466}.$$

We set the degree of hyperinterpolation to $n = 20$ and choose the number of quadrature points as $m = (2n + 1)^2$. Figure \ref{fig:example2} illustrates the numerical solutions of the integral equation \eqref{equ:equation} with their absolute errors. These solutions are still close to the exact solution $\varphi \equiv 1$, and again, the scheme utilizing spherical $t$-designs performs the best. 

\begin{figure}[htbp] 
  \centering
  \includegraphics[width=\textwidth]{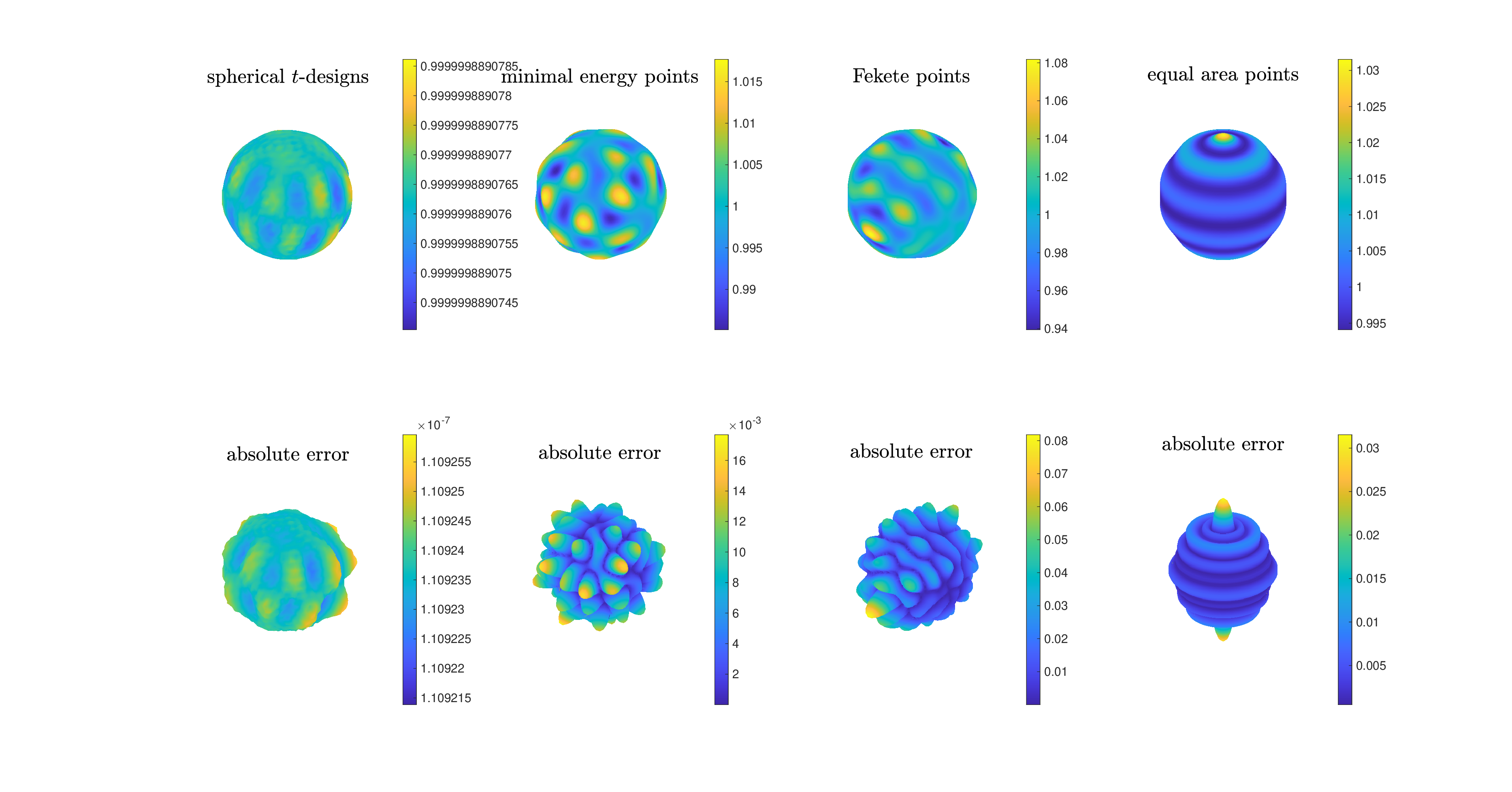}\vspace{-1cm}
  \caption{Singular $h(\x,\y)= |\x-\y|^{-0.5}$ and oscillatory $K(\x,\y)=\cos(10|\x-\y|)$: Numerical solutions ($n=20$ and $m = (2n+1)^2$) of the equation \eqref{equ:equation} with exact solution being constant 1 and their absolute errors.}\label{fig:example2}
\end{figure}

Similar to Figure \ref{fig:example11}, we validate our theoretical analysis for varying $n$ and $m$ in the case of $h(\x,\y)= |\x-\y|^{-0.5}$. This validation is demonstrated in Figure \ref{fig:example21}.  

\begin{figure}[htbp] 
  \centering
  \includegraphics[width=\textwidth]{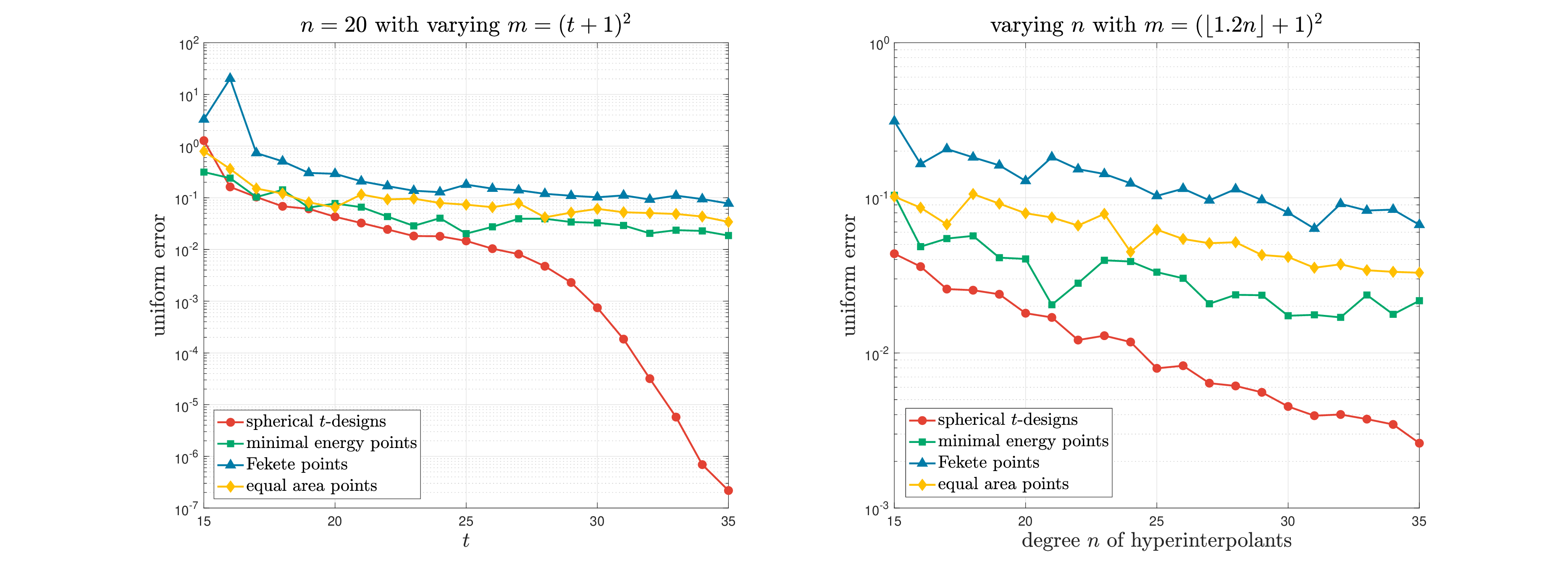}
  \caption{Singular $h(\x,\y)= |\x-\y|^{-0.5}$ and oscillatory $K(\x,\y)=\cos(10|\x-\y|)$: Uniform errors with different $n$ and $m$.}\label{fig:example21}
\end{figure}

\noindent \textbf{Experiment 3:} $h(\x,\y)= \log|\x-\y|$. We then consider another singular kernel, the log kernel (with natural base $e$). Similar to previous experiments, we test our scheme using the case where the solution is $\varphi \equiv 1$. In this experiment, we let $K = 1$. The reason for considering the non-oscillatory kernel $K$ is to test the accuracy of our scheme. Recall our error bound \eqref{equ:error}: if $K = 1$ and $\varphi = 1$, then $E_n(K(\x_0, \cdot)\varphi) = 0$ for $n\geq 1$; and if the quadrature rule is exact for polynomials of degree $2n$, then $\sqrt{\eta^2 + 4\eta}\|\chi^*\|_{L^{2}} = 0$. Thus, in the above situation, we should observe the error to be near machine epsilon. If $K$ contains oscillatory behavior, then similar results to the previous two experiments have been observed. In this case, note that the inhomogeneous term 
$$f(\x, \y) \equiv 1 - 2\pi \int_{-1}^1 \log\left(\sqrt{2(1-t)}\right) dt = 1-\pi(4\log2-2).$$

We set the degree of hyperinterpolation to $n = 5$ and choose the number of quadrature points as $m = (2n + 1)^2$. Figure \ref{fig:example3} illustrates the numerical solutions of the integral equation \eqref{equ:equation} with their absolute errors. The solution using spherical $t$-design is computed with error near machine epsilon. We further explore this interesting fact with varying $n$ and $m$. 

\begin{figure}[htbp] 
  \centering
  \includegraphics[width=\textwidth]{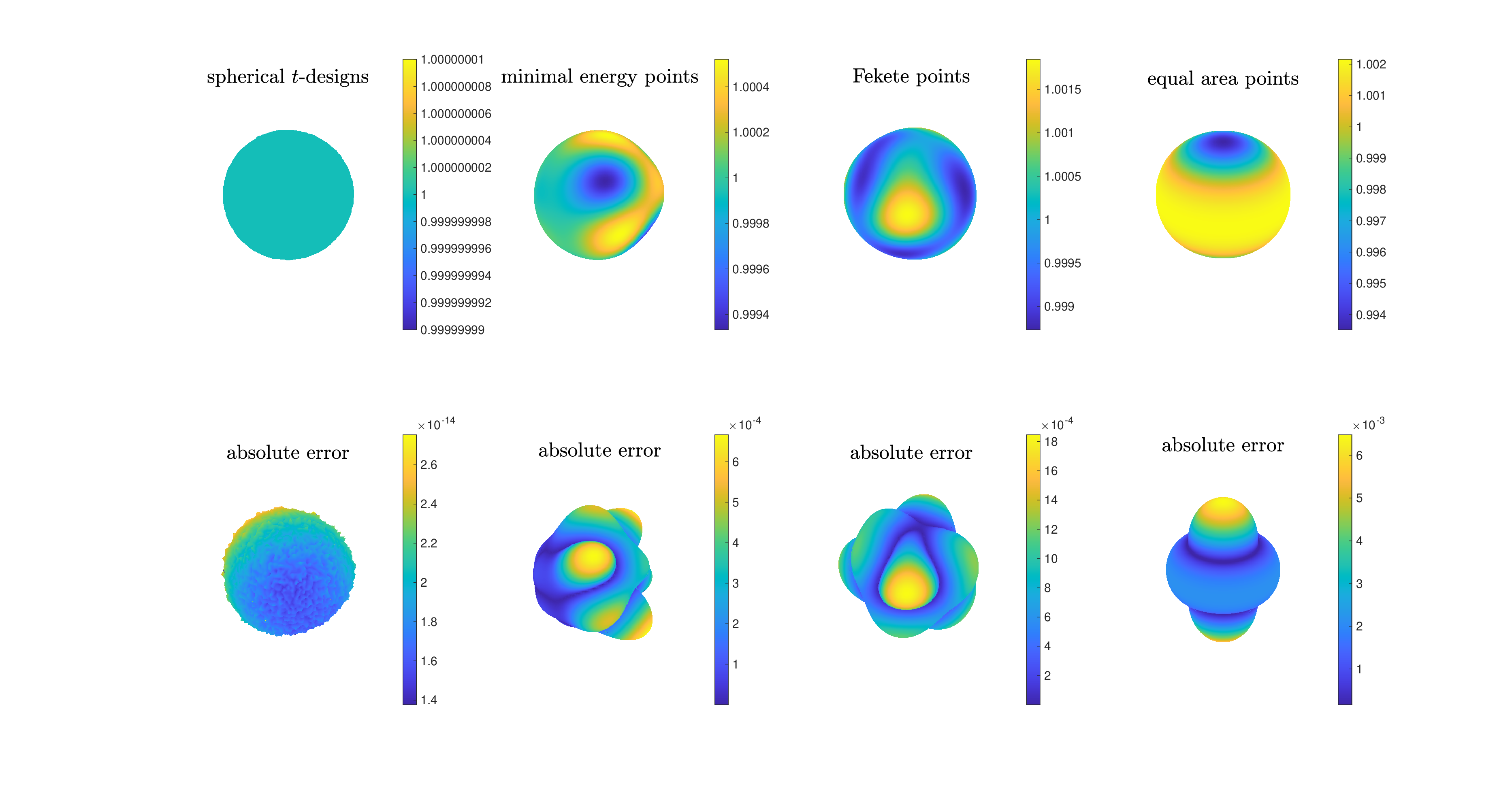}\vspace{-1cm}
  \caption{Singular $h(\x,\y)= \log|\x-\y|$ and non-oscillatory $K=1$: Numerical solutions ($n=5$ and $m = (2n+1)^2$) of the equation \eqref{equ:equation} with exact solution being constant 1 and their absolute errors.}\label{fig:example3}
\end{figure}

The uniform errors of computed numerical solutions with varying $n$ and $m$ for the log kernel are depicted in Figure \ref{fig:example31}. Our error analysis is validated in Figure \ref{fig:example31} again. Notably, the error plot related to spherical $t$-designs rapidly approaches the machine epsilon as explained before. Subsequently, this error curve starts to grow slowly due to the accumulation of round-off errors as the size of the linear system in our scheme increases. 

\begin{figure}[htbp] 
  \centering
  \includegraphics[width=\textwidth]{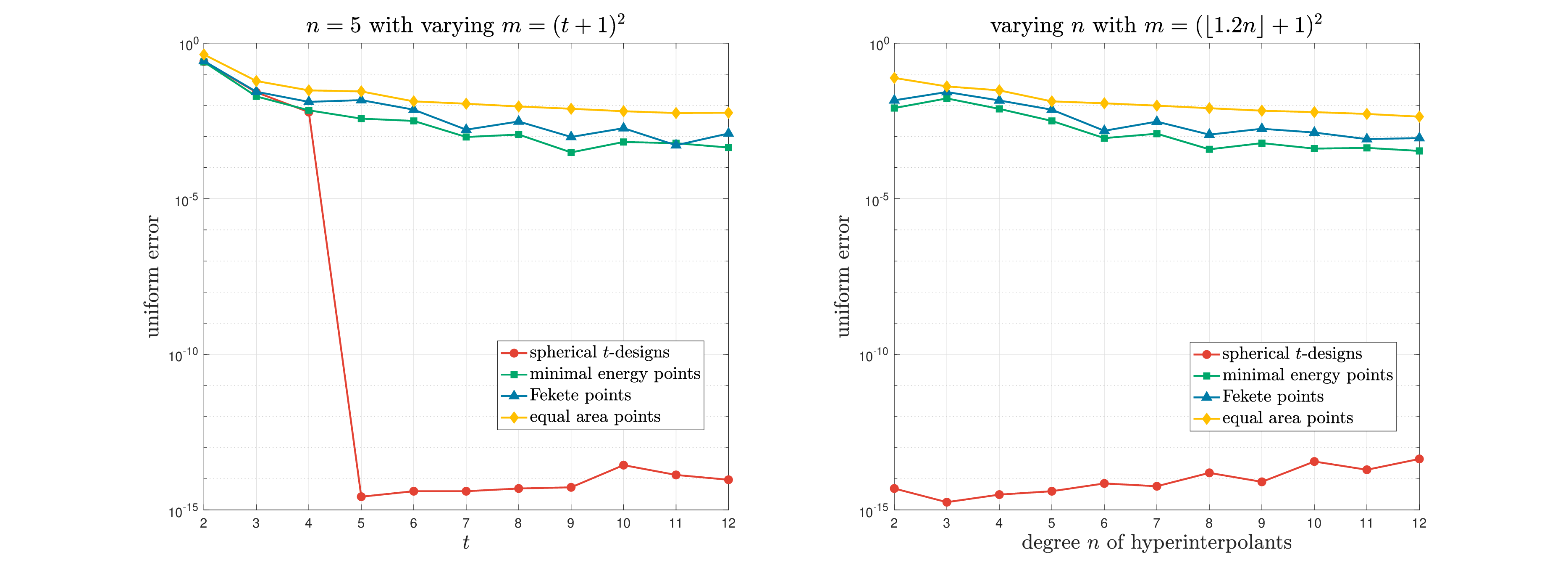}
  \caption{Singular $h(\x,\y)= \log|\x-\y|$ and non-oscillatory $K=1$: Uniform errors with different $n$ and $m$.}\label{fig:example31}
\end{figure}

\noindent \textbf{Experiment 4:} $h(\x,\y)= |\x-\y|^{\nu_1}|\x+\y|^{\nu_2}$. We then conduct an experiment under the same conditions as Experiment 1, but with the singular kernel modified to $|\x-\y|^{-0.5}|\x+\y|^{-0.5}$. For \( n = 20 \), the numerical solutions and their errors are displayed in Figure \ref{fig:example4}. Errors for varying $n$ and $m$ are shown in Figure \ref{fig:example41}. The conclusions drawn are consistent with those of the previous experiments.
\begin{figure}[htbp] 
  \centering
  \includegraphics[width=\textwidth]{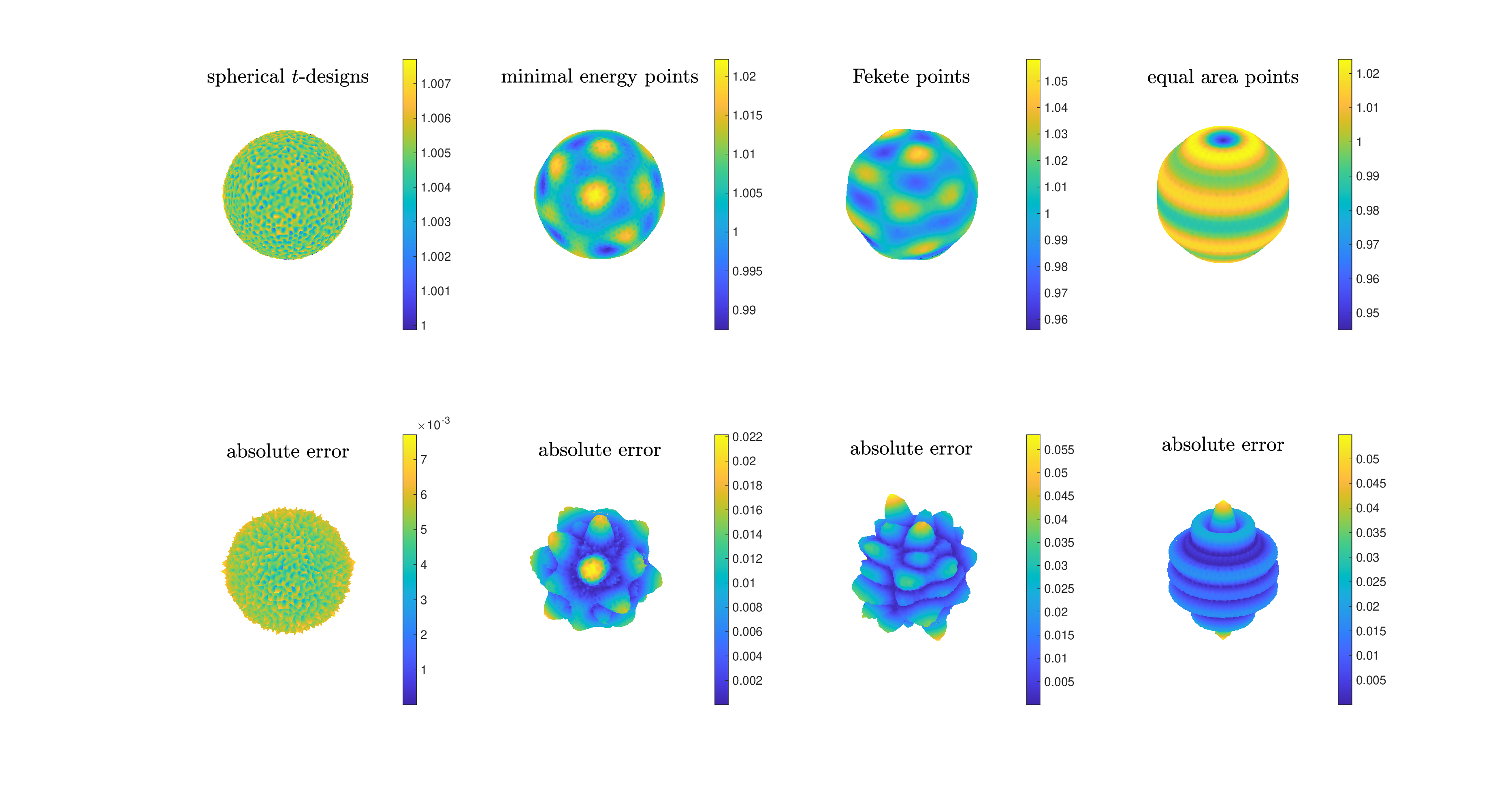}\vspace{-1cm}
  \caption{Singular $h(\x,\y)= |\x-\y|^{-0.5}|\x+\y|^{-0.5}$ and oscillatory $K(\x,\y)=\sin(10|\x-\y|)$: Numerical solutions ($n=20$ and $m = (2n+1)^2$) of the equation \eqref{equ:equation} with exact solution being constant 1 and their absolute errors.}\label{fig:example4}
\end{figure}

\begin{figure}[htbp] 
  \centering
  \includegraphics[width=\textwidth]{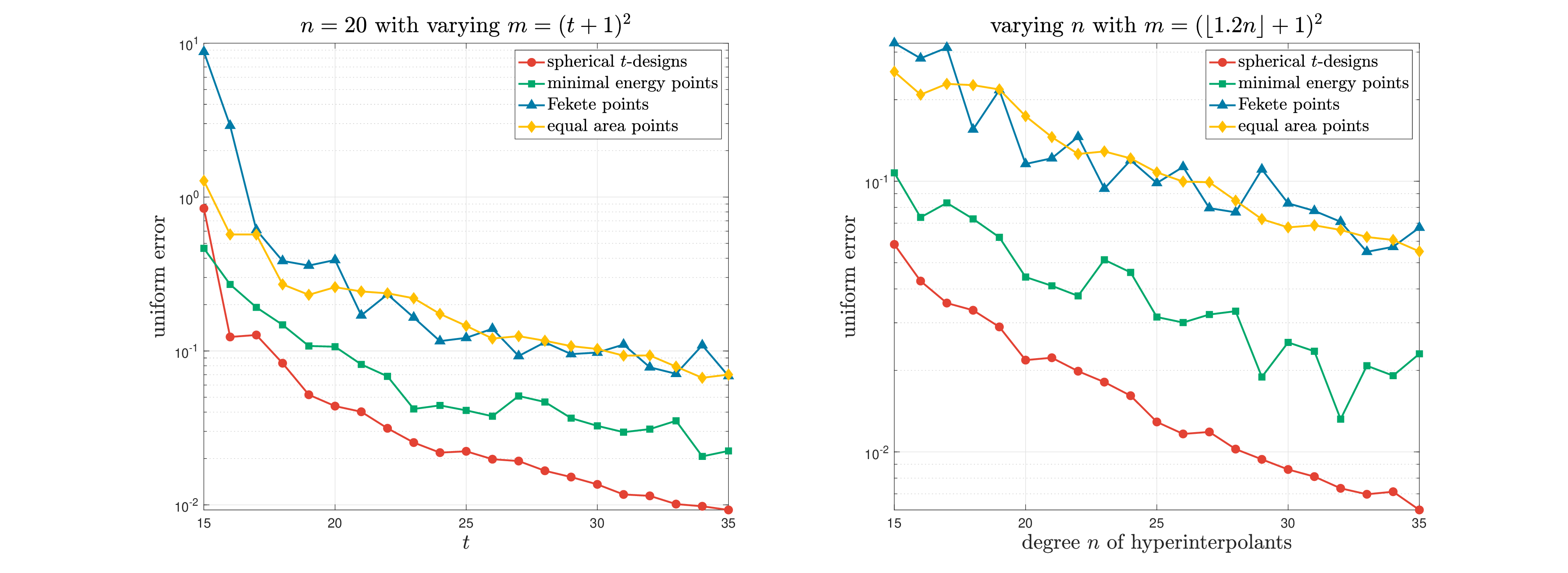}
  \caption{Singular $h(\x,\y)= |\x-\y|^{-0.5}|\x+\y|^{-0.5}$ and oscillatory $K(\x,\y)=\sin(10|\x-\y|)$: Uniform errors with different $n$ and $m$.}\label{fig:example41}
\end{figure}

\section{Discussion}\label{sec:discussion}

In this paper, we propose a two-stage numerical scheme \eqref{equ:scheme1} and \eqref{equ:scheme2} for solving the integral equation \eqref{equ:equation} of the second kind on the sphere $\St$ and provide numerical analysis. This scheme combines hyperinterpolation and product integration rules to provide an explicit evaluation of the weights \eqref{equ:Wj} $W_j(\x)$. We also present numerical experiments to show the accuracy of our scheme and validate our analysis.

In the construction of $W_j(\x)$, the distribution of quadrature points $\mathcal{X}_m$ and the evaluation of modified moments \eqref{equ:mm} are essential. Traditionally in numerical analysis, quadrature points are chosen such that the corresponding quadrature rule is exact for polynomials up to certain degrees. In such cases, error bounds with respect to polynomial degrees can be easily obtained, but limited choices of quadrature points are available. In our recent works \cite{an2022quadrature, an2024bypassing, an2024hyperinterpolation}, we have relaxed this exactness assumption to the Marcinkiewicz–Zygmund (MZ) property \eqref{equ:MZ}, allowing more families of quadrature points to be applied with reasonable error bounds derived. Spherical $t$-designs, as a set of points that leads to quadrature rules with exactness, undoubtedly perform well. Meanwhile, other quadrature points that may not lead to exact quadrature rules also perform reasonably well. Especially, minimal energy points are not suitable for polynomial interpolation since its Lebesgue constant is very large, as observed in An's PhD thesis \cite[Section 3]{an2011distribution}. However, in this paper, we show that minimal energy points are able to solving integral equation by product integration method effectively, which relies on quadrature rules and unfettered hyperinterpolation. Our method and analysis can also be extended to other point sets that satisfy the MZ property \eqref{equ:MZ}, such as randomly distributed points \cite{MR2475947}, or MZ-like properties, such as quasi-Monte Carlo (QMC) designs \cite{MR3246811}. For a comparison between point sets satisfying the MZ property and QMC designs, we refer to our previous paper \cite{an2024bypassing}.

The evaluation of modified moments \eqref{equ:mm} in this paper is based on the Funk--Hecke formula. This formula is effective for certain special singular kernels, such as the algebraic singular kernel $|\x-\y|^{\nu}$ with $\nu<0$, the log kernel $\log|\x-\y|$, and the extended algebraic singular kernel $|\x-\y|^{\nu_1}|\x-\y|^{\nu_2}$ with $\nu_1,\nu_2<0$. For cases involving both singular and oscillatory kernels, we treat the continuous oscillatory kernels as $K(\x,\y)$ and approximate the product of $K$ and the solution $\varphi$ using hyperinterpolation. The degree of hyperinterpolation must be sufficiently high to capture the oscillatory behavior of $K$. The evaluation of modified moments \eqref{equ:mm} is a crucial topic in numerical methods for singular integral equations. Deriving analytical expressions and stable evaluation methods, particularly for mixed singular and oscillatory kernels, will significantly enhance the applicability of our method.

The theory of hyperinterpolation using quadrature points that satisfy the MZ property \eqref{equ:MZ} is a crucial component of our analysis. This theory imposes the $L^2$ assumption on the singular kernel, which is the only additional assumption we have made compared to the common setting of singular integral equations. If the kernel $h$ is too singular to meet this assumption, we expect to implement transformations, such as Atkinson’s transformation \cite{MR2083337} with the choices of ``grading parameters,'' to obtain new integrand that is much smoother and thus fulfills the $L^2$ assumption.

\section*{Acknowledgement}

The work was supported by the National Natural Science Foundation of China (Project No. 12371099). The authors are grateful to Professor Ian H. Sloan for his encouragement and comments.

\bibliographystyle{siamplain}
\bibliography{myref}

\begin{thebibliography}{10}

\bibitem{an2011distribution}
{\sc C.~An}, {\em Distribution of points on the sphere and spherical designs},
  PhD thesis, The Hong Kong Polytechnic University, 2011.

\bibitem{MR2763659}
{\sc C.~An, X.~Chen, I.~H. Sloan, and R.~S. Womersley}, {\em Well conditioned
  spherical designs for integration and interpolation on the two-sphere}, SIAM
  J. Numer. Anal., 48 (2010), pp.~2135--2157,
  \url{https://doi.org/10.1137/100795140}.

\bibitem{an2022quadrature}
{\sc C.~An and H.-N. Wu}, {\em On the quadrature exactness in
  hyperinterpolation}, BIT, 62 (2022), pp.~1899--1919,
  \url{https://doi.org/10.1007/s10543-022-00935-x}.

\bibitem{an2024bypassing}
{\sc C.~An and H.-N. Wu}, {\em Bypassing the quadrature exactness assumption of
  hyperinterpolation on the sphere}, J. Complexity, 80 (2024), p.~101789,
  \url{https://doi.org/10.1016/j.jco.2023.101789}.

\bibitem{an2024hyperinterpolation}
{\sc C.~An and H.-N. Wu}, {\em Is hyperinterpolation efficient in the
  approximation of singular and oscillatory functions?}, J. Approx. Theory, 299
  (2024), p.~106013, \url{https://doi.org/10.1016/j.jat.2023.106013}.

\bibitem{MR0443383}
{\sc P.~M. Anselone}, {\em Collectively compact operator approximation theory
  and applications to integral equations}, Prentice Hall, Englewood Cliffs, NJ,
  1971.

\bibitem{MR0184448}
{\sc P.~M. Anselone and R.~H. Moore}, {\em Approximate solutions of integral
  and operator equations}, J. Math. Anal. Appl., 9 (1964), pp.~268--277,
  \url{https://doi.org/10.1016/0022-247X(64)90042-3}.

\bibitem{MR2083337}
{\sc K.~Atkinson}, {\em Quadrature of singular integrands over surfaces},
  Electron. Trans. Numer. Anal., 17 (2004), pp.~133--150.

\bibitem{MR2934227}
{\sc K.~Atkinson and W.~Han}, {\em Spherical harmonics and approximations on
  the unit sphere: An introduction}, vol.~2044 of Lecture Notes in Mathematics,
  Springer, Heidelberg, 2012, \url{https://doi.org/10.1007/978-3-642-25983-8}.

\bibitem{MR1464941}
{\sc K.~E. Atkinson}, {\em The numerical solution of integral equations of the
  second kind}, Cambridge University Press, Cambridge, 1997.

\bibitem{MR0129147}
{\sc H.~Brakhage}, {\em \"{U}ber die numerische {B}ehandlung von
  {I}ntegralgleichungen nach der {Q}uadraturformelmethode}, Numer. Math., 2
  (1960), pp.~183--196, \url{https://doi.org/10.1007/BF01386221},
  \url{https://doi.org/10.1007/BF01386221}.

\bibitem{MR3246811}
{\sc J.~S. Brauchart, E.~B. Saff, I.~H. Sloan, and R.~S. Womersley}, {\em Q{MC}
  designs: optimal order quasi {M}onte {C}arlo integration schemes on the
  sphere}, Math. Comp., 83 (2014), pp.~2821--2851,
  \url{https://doi.org/10.1090/S0025-5718-2014-02839-1}.

\bibitem{MR3397293}
{\sc D.~Colton and R.~Kress}, {\em Integral equation methods in scattering
  theory}, vol.~72, SIAM, Philadelphia, PA, 2013.

\bibitem{MR2986407}
{\sc D.~Colton and R.~Kress}, {\em Inverse acoustic and electromagnetic
  scattering theory}, vol.~93, Springer, New York, third~ed., 2013.

\bibitem{MR3746524}
{\sc S.~De~Marchi and A.~Kro\'{o}}, {\em Marcinkiewicz-{Z}ygmund type results
  in multivariate domains}, Acta Math. Hungar., 154 (2018), pp.~69--89,
  \url{https://doi.org/10.1007/s10474-017-0769-4}.

\bibitem{delsarte1991geometriae}
{\sc P.~Delsarte, J.-M. Goethals, and J.~J. Seidel}, {\em Spherical codes and
  designs}, Geom. Dedicata, 6 (1977), pp.~363--388,
  \url{https://doi.org/10.1007/bf03187604}.

\bibitem{MR2371991}
{\sc J.~Dick, P.~Kritzer, F.~Y. Kuo, and I.~H. Sloan}, {\em
  Lattice-{N}ystr\"{o}m method for {F}redholm integral equations of the second
  kind with convolution type kernels}, J. Complexity, 23 (2007), pp.~752--772,
  \url{https://doi.org/10.1016/j.jco.2007.03.004}.

\bibitem{driscoll2014chebfun}
{\sc T.~A. Driscoll, N.~Hale, and L.~N. Trefethen}, {\em Chebfun guide}, 2014.

\bibitem{filbir2011marcinkiewicz}
{\sc F.~Filbir and H.~N. Mhaskar}, {\em Marcinkiewicz--{Z}ygmund measures on
  manifolds}, J. Complexity, 27 (2011), pp.~568--596,
  \url{https://doi.org/10.1016/j.jco.2011.03.002}.

\bibitem{graham2002fully}
{\sc I.~G. Graham and I.~H. Sloan}, {\em Fully discrete spectral boundary
  integral methods for {H}elmholtz problems on smooth closed surfaces in
  $\mathbb{{R}}^3$}, Numer. Math., 92 (2002), pp.~289--323,
  \url{https://doi.org/10.1007/s002110100343}.

\bibitem{MR1723850}
{\sc R.~Kress}, {\em Linear integral equations}, vol.~82 of Applied
  Mathematical Sciences, Springer-Verlag, New York, second~ed., 1999,
  \url{https://doi.org/10.1007/978-1-4612-0559-3},
  \url{https://doi.org/10.1007/978-1-4612-0559-3}.

\bibitem{MR2475947}
{\sc Q.~T. Le~Gia and H.~N. Mhaskar}, {\em Localized linear polynomial
  operators and quadrature formulas on the sphere}, SIAM J. Numer. Anal., 47
  (2009), pp.~440--466, \url{https://doi.org/10.1137/060678555}.

\bibitem{MR2582801}
{\sc P.~Leopardi}, {\em Diameter bounds for equal area partitions of the unit
  sphere}, Electron. Trans. Numer. Anal., 35 (2009), pp.~1--16.

\bibitem{Marcinkiewicz1937}
{\sc J.~Marcinkiewicz and A.~Zygmund}, {\em Sur les fonctions indépendantes},
  Fund. Math., 29 (1937), pp.~60--90, \url{http://eudml.org/doc/212925}.

\bibitem{MR2086950}
{\sc H.~N. Mhaskar}, {\em Local quadrature formulas on the sphere}, J.
  Complexity, 20 (2004), pp.~753--772,
  \url{https://doi.org/10.1016/j.jco.2003.06.005}.

\bibitem{mhaskar2001spherical}
{\sc H.~N. Mhaskar, F.~J. Narcowich, and J.~D. Ward}, {\em Spherical
  {M}arcinkiewicz--{Z}ygmund inequalities and positive quadrature}, Math.
  Comp., 70 (2001), pp.~1113--1130,
  \url{https://doi.org/10.1090/S0025-5718-00-01240-0}.

\bibitem{MR1863015}
{\sc H.~N. Mhaskar, F.~J. Narcowich, and J.~D. Ward}, {\em Corrigendum to:
  ``{S}pherical {M}arcinkiewicz--{Z}ygmund inequalities and positive
  quadrature''}, Math. Comp., 71 (2002), pp.~453--454,
  \url{https://doi.org/10.1090/S0025-5718-01-01437-5}.

\bibitem{MR0199449}
{\sc C.~M\"{u}ller}, {\em Spherical harmonics}, Springer Berlin, Heidelberg,
  1966.

\bibitem{MR2457245}
{\sc T.~D. Pham, T.~Tran, and Q.~T. Le~Gia}, {\em Numerical solutions to a
  boundary integral equation with spherical radial basis functions}, ANZIAM J.,
  50 (2008), pp.~C266--C281, \url{https://doi.org/10.1017/S1446181108080322}.

\bibitem{MR1306011}
{\sc E.~A. Rakhmanov, E.~B. Saff, and Y.~M. Zhou}, {\em Minimal discrete energy
  on the sphere}, Math. Res. Lett., 1 (1994), pp.~647--662,
  \url{https://doi.org/10.4310/MRL.1994.v1.n6.a3}.

\bibitem{riesz1918lineare}
{\sc F.~Riesz}, {\em {\"U}ber lineare {F}unktionalgleichungen}, Acta Math., 41
  (1916), pp.~71--98, \url{https://doi.org/10.1007/BF02422940}.

\bibitem{MR638450}
{\sc I.~H. Sloan}, {\em Analysis of general quadrature methods for integral
  equations of the second kind}, Numer. Math., 38 (1981), pp.~263--278,
  \url{https://doi.org/10.1007/BF01397095}.

\bibitem{sloan1995polynomial}
{\sc I.~H. Sloan}, {\em Polynomial interpolation and hyperinterpolation over
  general regions}, J. Approx. Theory, 83 (1995), pp.~238--254,
  \url{https://doi.org/10.1006/jath.1995.1119}.

\bibitem{MR2065291}
{\sc I.~H. Sloan and R.~S. Womersley}, {\em Extremal systems of points and
  numerical integration on the sphere}, Adv. Comput. Math., 21 (2004),
  pp.~107--125, \url{https://doi.org/10.1023/B:ACOM.0000016428.25905.da}.

\bibitem{tran2009boundary}
{\sc T.~Tran, Q.~Le~Gia, I.~H. Sloan, and E.~P. Stephan}, {\em Boundary
  integral equations on the sphere with radial basis functions: error
  analysis}, Applied Numerical Mathematics, 59 (2009), pp.~2857--2871,
  \url{https://doi.org/10.1016/j.apnum.2008.12.033}.

\bibitem{MR1845243}
{\sc R.~S. Womersley and I.~H. Sloan}, {\em How good can polynomial
  interpolation on the sphere be?}, Adv. Comput. Math., 14 (2001),
  pp.~195--226, \url{https://doi.org/10.1023/A:1016630227163}.

\end{thebibliography}
\clearpage

\end{document}